\newtheorem{theorem}{Theorem}[section]
\newtheorem{lemma}[theorem]{Lemma}
\theoremstyle{definition}
\newtheorem{definition}[theorem]{Definition}
\theoremstyle{proposition}
\newtheorem{proposition}[theorem]{Proposition}
\theoremstyle{remark}
\newtheorem{remark}[theorem]{Remark}
\theoremstyle{corollary}
\newtheorem{corollary}[theorem]{Corollary}
\theoremstyle{question}
\newtheorem{question}[theorem]{Question}
\theoremstyle{convention}
\newcommand{\field}[1]{\mathbb{#1}}
\newcommand{\C}{\field{C}}
\newcommand{\R}{\field{R}}
\newcommand{\Z}{\field{Z}}
\newcommand{\Q}{\field{Q}}
\newcommand{\tham}{(M,\omega,T,\phi)}
\newcommand{\sham}{(M,\omega,S^1,\phi)}
\title{\textsc{Toric one-skeletons for complexity-one spaces}}
\author{Isabelle Charton}
\begin{document}

	\maketitle

\begin{abstract}
	A complexity-one space is a compact symplectic manifold $(M, \omega)$ endowed with an effective Hamiltonian action of a torus $T$ of dimension $\frac{1}{2}\dim(M)-1$. In this note we prove that for a certain class of complexity-one spaces the Poincar\'e dual of the Chern class $c_{n-1}$ can be represented by a collection of $\frac{n}{2}\chi(M)$ symplectic embedded $2$-spheres, where $\chi(M)$ is the Euler characteristic of $M$ and $\dim(M)=2n$. We call such a collection a toric one-skeleton. The classification of complexity-one spaces is an important subject in symplectic geometry. A nice subcategory of those spaces are the ones which are monotone. The existence of a toric one-skeleton is a useful tool to understand six-dimensional monotone complexity-one spaces. In particular, we will show that the existence of a toric one-skeleton for such a space implies that the second Betti number of $M$ is at most seven. This is a simple application of results by Sabatini-Sepe and Lindsay-Panov.
\end{abstract}

\tableofcontents

	\begin{section}{Introduction}\label{Sec:Intro}
		
	\footnote{2010 Mathematics Subjects Classification. 57R91, 57S25, 37J10 \\
	Keywords and phrase. Complexity-one spaces, Equivariant Cohomology}A symplectic action of a torus \(T=(S^1)^d\) on a symplectic manifold is called \textbf{Hamiltonian}, if it  admits a \textbf{moment map} \(\phi\colon M \rightarrow (\text{Lie}(T))^*\), i.e. \(\phi\) is a smooth map such that
	\begin{align}
	\text{d} \left\langle \phi , \xi \right\rangle = -\iota_{X_\xi} \omega
	\end{align}
	for all \(\xi \in \text{Lie}(T)\), where  \(X_\xi\) is the vector field on \(M\) generated by  \(\xi\).
	We call the quadruple \(\tham\) a \textbf{Hamiltonian \(T\)-space}, if the Hamiltonian action is \textbf{effective} and \(M\) is \textbf{compact}.
	Since the action is assumed to be effective and Hamiltonian, we have \(\dim(T)\leq \frac{1}{2}\dim(M)\). The non-negative integer \(k=\frac{1}{2}\dim(M)-\dim(T)\) is called the \textbf{complexity} of \(\tham\). We also call \(\tham\)  a \textbf{complexity-k space}.  A complexity-zero space is a \textbf{symplectic toric manifold}.
	The classification of Hamiltonian \(T\)-spaces is an important subject in symplectic geometry.
	The Convexity-Theorem - due to Atiyah \cite{atiyah} and Guillemin-Sternberg \cite{GSconvex}  - states that the image of the moment map  \(\phi(M)\) is a convex polytope. Moreover, Delzant \cite{delzant} proved that symplectic toric manifolds are classified by their  moment map image. Also the (equivariant) cohomology ring of a symplectic  toric manifold can be easily  recovered from its  moment map image.
	This is not true  for Hamiltonian \(T\)-spaces with complexity  greater then zero.
	In this paper we study  properties of the second homology groups of complexity-one spaces.
    In particular, a symplectic manifold \((M,\omega)\) always admits  an almost  complex structure \(J\colon TM \rightarrow TM\) which is compatible with the symplectic form \(\omega\), i.e. \(\omega(\cdot,J\cdot)\) is a Riemannian metric on \(M\). Since the space of such structures is contractible, we can define complex invariants of the tangent  bundle \(TM\), for instance Chern classes. \\
    It is known that for a symplectic toric manifold of dimension \(2n\)  the Poincaré dual of the Chern class \(c_{n-1}(M)\in H^{2n-2}(M;\Z)\) can be realised by a collection \(\mathcal{S}\) of \(\frac{n}{2}\chi(M)\) symplectic embedded surfaces, where \(\chi(M)\) is the Euler Characteristic of \(X\).  Namely, the preimage of \(\phi^{-1}(e)\) of each edge \(e\) of the moment map polytope \(\phi(M)\) is a  smoothly embedded, symplectic \(2\)-sphere and \(\mathcal{S}\) is the collection of all these \(2\)-spheres.  We call \(\mathcal{S}\) the \textbf{toric one-skeleton} of the corresponding symplectic toric manifold.
    In \cite{1224} Godinho, Sabatini and von Heymann generalized the concept of toric one-skeletons for Hamiltonian \(T\)-spaces with only isolated fixed points. Similarly, by the next definition we generalize the concept of toric one-skeletons for all  Hamiltonian \(T\)-spaces.
	
	\begin{definition}\label{Def:toric1skelton}
		Let \(\tham\) be a Hamiltonian \(T\)-space of dimension \(2n\) and let \(\chi(M)\) be the Euler Characteristic of \(M\). A \textbf{toric one-skeleton} \(\mathcal{S}\) of \(\tham\) is a collection of  \(\frac{n}{2}\chi(M)\) smoothly embedded, symplectic surfaces, such that the Poincaré dual to their union in \(H_2(M;\Z)\) is the Chern class \(c_{n-1}(M)\), i.e. 
		\begin{align}
		\int_M \mu \cdot c_{n-1}(M)\quad =\quad \sum_{S\in \mathcal{S}} \left( \int_S \mu \right) 
		\end{align}
		for all \(\mu \in H^2(M;\Z)\).
	\end{definition}

	This definition leads to the following question.
	\begin{question}\label{Q:existenceoft1s}
		When does a Hamiltonian \(T\)-space admit a toric one-skeleton?
	\end{question}

We investigate  the following reasons into this question. An important subcategory of Hamiltonian \(T\)-spaces are the ones which are monotone\footnote{A symplectic manifold \((M,\omega)\) is monotone if its first Chern class \(c_1\) is a multiple of the class \([\omega]\) in \(H^2(M;\R)\).}. The classification of monotone Hamiltonian \(T\)-spaces is not solved -even not in dimension six-. By the work of Sabatini-Sepe \cite{SabatiniSepe} and  Lindsay-Panov \cite{LP}  we have that the existence of a toric one-skeleton for a monotone complexity-one  space \(\tham\) of dimension six implies that the second Betti number of \(M\) is at most \(7\) (see  Lemma \ref{Lem:App}). Hence, the existence of a toric one-skeleton is a useful tool to understand six-dimensional monotone complexity-one spaces. 

In this notes we prove that a certain class of complexity-one spaces admit a toric one-skeleton. Now we introduce the class of complexity-one spaces that we will consider.  
Let \(\tham\) be a complexity-one space. Consider the moment map polytope \(\phi(M)\) and let
	 \begin{align*}
	 e= \left( \R\left\langle  \alpha_e\right\rangle  + \beta_e \right) \cap \phi(M) \
	 \end{align*}
	 be an edge of \(\phi(M)\). Then \(F_e=\phi^{-1}(e)\) is a smoothly embedded, symplectic, \(T\)-invariant
	 submanifold with stabilizer \(T_e=\text{exp}(\text{ker}(\alpha_e))\), a codimension-1 subtorus of \(T\). The  \(S^1\cong T/ T_e\)-action on \(F_e\) is effective and  Hamiltonian. Moreover, \(F_e\) has either dimension \(2\) or \(4\). We call \(e\) a \textbf{fat edge}, if the dimension of \(F_e\) is \(4\) (see Section \ref{Sec:MMP}).

	 \begin{definition}\label{Def:extnsion-prop}
	 Let \(\tham\) be a complexity-one space. We say \(\tham\) has the \textbf{Extension-Property \((\mathcal{P}_E)\)} if the following holds :\\
	 Let \(F_e\) be a \(4\)-dimensional isotropic submanifold of \(M\) which is the preimage of  a fat edge \(e\) of the moment map polytope  \(\phi(M)\). Then the \(S^1=T/ T_e\)-action on \(F_e\) extends to an effective Hamiltonian \(T^2\)-action.
	 
	 \end{definition}
	 So the goal of this notes is to prove the following theorem.
	 \begin{theorem}\label{Thm:main}
		A complexity-one space  \(\tham\) that satisfies the Extension-Property \((\mathcal{P}_E)\)  admits a toric one-skeleton.
	\end{theorem}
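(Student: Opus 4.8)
The plan is to build the toric one-skeleton $\mathcal{S}$ edge by edge and then to verify the two defining conditions of Definition \ref{Def:toric1skelton} — the correct count and the Poincar\'e duality — by localizing at the fixed-point data. First I would analyze the edges $e$ of the moment map polytope $\phi(M)$ and split them into \emph{thin} edges (where $F_e=\phi^{-1}(e)$ is an embedded symplectic $2$-sphere) and \emph{fat} edges (where $F_e$ is $4$-dimensional). To each thin edge I associate the single sphere $F_e$. To each fat edge I invoke the Extension-Property $(\mathcal{P}_E)$: since the $S^1=T/T_e$-action on $F_e$ extends to an effective Hamiltonian $T^2$-action, $(F_e,\omega|_{F_e},T^2)$ is a symplectic toric $4$-manifold and hence carries its own toric one-skeleton $\mathcal{S}_e$, a collection of $\chi(F_e)$ embedded symplectic $2$-spheres whose union is Poincar\'e dual to $c_1(F_e)$ in $H_2(F_e;\Z)$. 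Because $F_e$ is a symplectic submanifold of $M$, each sphere of $\mathcal{S}_e$ is symplectically embedded in $M$. I then set $\mathcal{S}$ to be the union of all thin spheres together with all the collections $\mathcal{S}_e$.

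For the count I would use that $\chi(M)=\sum_F\chi(F)$, summed over the fixed components $F$ of the $T$-action (each a point or a surface). Since a $4$-dimensional toric manifold has $\chi(F_e)$ edge spheres, one gets $|\mathcal{S}|=\#\{\text{thin edges}\}+\sum_{\text{fat }e}\chi(F_e)$; matching this with $\tfrac{n}{2}\chi(M)$ is a combinatorial identity obtained by counting, at each fixed component, how many edges of $\phi(M)$ emanate from it and how many of them are absorbed into a fat $F_e$.

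The substance is the identity $\int_M\mu\cdot c_{n-1}(M)=\sum_{S\in\mathcal{S}}\int_S\mu$ for all $\mu\in H^2(M;\Z)$. I would prove it by the Atiyah--Bott--Berline--Vergne localization formula applied to equivariant extensions of $\mu$ and $c_{n-1}(M)$, rewriting the left-hand side as a sum of contributions indexed by the fixed components $F$, each built from the restriction $\tilde\mu|_F$, the equivariant Chern data of $F$, and the equivariant Euler class $e_T(\nu_F)$ of its normal bundle. On the right-hand side every $S\in\mathcal{S}$ is $T$-invariant, so $\int_S\mu$ is computed from the restrictions of $\tilde\mu$ to the two poles of $S$ divided by the isotropy weight of $S$. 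Regrouping both sides by fixed component reduces the global identity to a purely local statement at each isolated $F$: the sum of $w^{-1}$ over the edge spheres of $\mathcal{S}$ incident to $F$ must equal $\sum_i (w_i^{F})^{-1}$, the ratio of the equivariant $c_{n-1}(M)$ and the equivariant Euler class at $F$. For a thin edge this is immediate, since the edge contributes the single weight direction along which it leaves $F$.

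The main obstacle is precisely this local identity at a fixed point lying in a fat $F_e$. There the two isotropy weights of $M$ tangent to $F_e$ must be shown to reproduce, after passing through the projection $T\to T/T_e=S^1$ and the chosen extension to $T^2$, exactly the two edge-sphere weights that the toric one-skeleton $\mathcal{S}_e$ of $F_e$ sends through that vertex. This weight reconciliation between the ambient torus $T$ and the auxiliary torus $T^2$ is where the Extension-Property is indispensable, and I expect it to demand the most care — together with the analogous bookkeeping for those fixed components that are surfaces rather than isolated points, whose equivariant Chern contributions must be tracked through the same localization.
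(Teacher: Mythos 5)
There is a genuine gap at the very first step: you assume that the toric one-skeleton can be assembled from preimages of edges of the moment polytope $\phi(M)$ (thin edges contributing one sphere each, fat edges contributing the toric skeleton of $F_e$). In a complexity-one space this is false: the invariant $2$-spheres through the fixed points do not all lie over edges of $\phi(M)$. An isolated GKM fixed point $p$ can map to an interior point of the polytope (or to the interior of a higher-dimensional face), in which case the spheres $S_{\alpha_{p,i}}$ determined by its weights map to segments of $\phi(M)$ that are not edges at all. This is precisely why the paper does not define its collection via the face structure of the polytope: the pre-toric one-skeleton (Definition \ref{Def:pretoric1skelton}) is defined through the isotropy data --- for every isolated fixed point and every light weight $\alpha$ one takes the component of $M^{\exp(\ker\alpha)}$ through that point --- and these spheres may well sit over interior segments. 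Your construction omits exactly these spheres, and both the count and the Poincar\'e duality identity fail without them.

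A concrete counterexample: restrict the standard toric $T^3$-action on $\C P^3$ to a generic subtorus $T^2$. This is a complexity-one space whose fixed set consists of the four isolated toric fixed points (all GKM), so $(\mathcal{P}_E)$ holds by Remark \ref{Rem:Karshon+Pe}. The moment image is a planar projection of the tetrahedron, i.e.\ a quadrilateral, and the preimage of each of its four edges is a single sphere: all four edges are thin and there are no fat edges. Your $\mathcal{S}$ then has $4$ spheres, whereas $\frac{n}{2}\chi(M)=\frac{3}{2}\cdot 4=6$; the two spheres lying over the diagonals of the quadrilateral are missing, and their classes are needed to represent the Poincar\'e dual of $c_2$. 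A second, smaller problem is your treatment of fixed surfaces: by Lemma \ref{Lem:meetingfatedge} (iii), a fixed $2$-sphere $\Sigma$ lies in $F_e$ for all $n-1$ fat edges $e$ meeting the vertex $\phi(\Sigma)$, and $\Sigma$ is an edge-sphere of the toric skeleton of each such $F_e$; taking the full skeletons $\mathcal{S}_e$ therefore counts $\Sigma$ with multiplicity $n-1$, while in the localization formula its contribution $\int_\Sigma\mu^{S^1}$ plus the genus correction term must appear exactly once. The paper resolves this by using the \emph{reduced} skeletons $\bar{\mathcal{S}}_{F_e}$, which discard the $T$-fixed spheres, and adding the set $M_2^T$ to the collection exactly once.
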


\begin{remark} \label{Rem:Karshon+Pe}Let \(\tham\) be a complexity-one space.
By Karshon's classification of \(4\)-dimensional Hamiltonian \(S^1\)-spaces
 \cite{12}, we have the following facts:\\
\(\bullet\) If all connected components of \(M^T\) are isolated points, then \(\tham\) satisfies the Extension-Property \((\mathcal{P}_E)\).\\
\(\bullet\) If all connected components of \(M^T\) are  \(2\)-spheres, then \(\tham\) satisfies the Extension-Property \((\mathcal{P}_E)\).\\
On the other side, we have that;\\
\(\bullet\) If \(\tham\) satisfies the Extension-Property \((\mathcal{P}_E)\), then a connected component of \(M^T\) is  an isolated point or a \(2\)-sphere.
\end{remark}
As a consequence of Theorem \ref{Thm:main} we obtain the following corollary.
\begin{corollary}\label{Cor:main}
Let \(\tham\) be a monotone complexity-one space of dimension six, which satisfies the Extension-Property \((\mathcal{P}_E)\). Then the second Betti number of \(M\) is at most 7. 
\end{corollary}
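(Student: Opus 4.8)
The plan is to obtain the corollary as a direct combination of Theorem \ref{Thm:main} with the bound recorded in Lemma \ref{Lem:App}, and then to explain the numerical mechanism that makes that bound produce the value \(7\). First I would apply Theorem \ref{Thm:main}: since \(\tham\) is a complexity-one space of dimension \(2n=6\) satisfying the Extension-Property \((\mathcal{P}_E)\), it admits a toric one-skeleton \(\mathcal{S}\), that is, a collection of \(\frac{n}{2}\chi(M)=\frac{3}{2}\chi(M)\) smoothly embedded symplectic \(2\)-spheres whose union is Poincar\'e dual to \(c_{n-1}(M)=c_2(M)\). Feeding this collection into Lemma \ref{Lem:App}, which packages the work of Sabatini-Sepe \cite{SabatiniSepe} and Lindsay-Panov \cite{LP}, then yields \(b_2(M)\leq 7\). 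The formal implication is immediate once the two inputs are in place, so the substance of the argument lives in Lemma \ref{Lem:App}.

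I would organize the understanding of that bound around two numerical constraints. On one hand, monotonicity forces a universal Chern-number relation in dimension six: the third Todd polynomial integrates to the arithmetic-genus normalization \(\int_M \tfrac{1}{24}\,c_1(M)\,c_2(M)=1\), so that \(\int_M c_1(M)\,c_2(M)=24\); this is the Riemann-Roch/index input supplied through \cite{SabatiniSepe,LP}. On the other hand, the defining property of the toric one-skeleton rewrites this integral as a sum over the spheres, and monotonicity bounds each summand from below. Indeed, for \(S\in\mathcal{S}\) symplectic we have \(\int_S\omega>0\), and monotonicity \(c_1=\lambda[\omega]\) with \(\lambda>0\) together with integrality of \(c_1\) gives \(\int_S c_1(M)\geq 1\). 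Hence
\begin{align*}
24 \;=\; \int_M c_1(M)\,c_2(M) \;=\; \sum_{S\in\mathcal{S}}\int_S c_1(M) \;\geq\; \#\mathcal{S} \;=\; \tfrac{3}{2}\,\chi(M),
\end{align*}
so that \(\chi(M)\leq 16\).

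It then remains to convert the bound on the Euler characteristic into a bound on \(b_2(M)\). By Remark \ref{Rem:Karshon+Pe} the Extension-Property forces every connected component of \(M^T\) to be an isolated point or a \(2\)-sphere, and both have vanishing odd cohomology; Morse-Bott theory for the moment map then gives \(b_1(M)=b_3(M)=0\), whence \(\chi(M)=2+2\,b_2(M)\) in dimension six. Combining this with \(\chi(M)\leq 16\) yields exactly \(b_2(M)\leq 7\). The main obstacle is not this final bookkeeping, which is routine, but securing the two numerical facts under the monotonicity hypothesis -- the normalization \(c_1c_2=24\) and the per-sphere lower bound -- and this is precisely the content I am licensed to cite through Lemma \ref{Lem:App} and the results of Sabatini-Sepe and Lindsay-Panov.
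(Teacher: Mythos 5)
Your proposal is correct and follows the paper's route at the top level: Corollary \ref{Cor:main} is obtained exactly as in the paper, by feeding Theorem \ref{Thm:main} (existence of a toric one-skeleton under \((\mathcal{P}_E)\)) into Lemma \ref{Lem:App}, and your unpacking of the numerics (\(24=\int_M c_1c_2 \geq \#\mathcal{S}=\tfrac{3}{2}\chi(M)\), then \(\chi(M)=2+2b_2\)) matches the paper's proof of that lemma. The one genuine divergence is how you kill the odd Betti numbers: the paper's Lemma \ref{Lem:App} cites Sabatini--Sepe \cite{SabatiniSepe} for \(b_1=b_3=b_5=0\), whereas you derive it from Remark \ref{Rem:Karshon+Pe} plus the perfection of the moment-map Morse--Bott function --- under \((\mathcal{P}_E)\) the critical components are isolated points and \(2\)-spheres, so \(H^{\mathrm{odd}}(M;\R)=\bigoplus_F H^{\mathrm{odd}-2d_F}(F;\R)=0\). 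Your variant is more self-contained for the corollary (it reuses the Morse--Bott computation already present in the paper's proof of Theorem \ref{Thm:main}), but it is strictly less general than the paper's lemma, which assumes only the existence of a toric one-skeleton and puts no hypothesis on the fixed-point set; if you wanted your argument to replace Lemma \ref{Lem:App} itself, you would still need \cite{SabatiniSepe}. One convention point, present in both your write-up and the paper's: the per-sphere bound \(\int_S c_1\geq 1\) and the Lindsay--Panov identity \(\int_M c_1c_2=24\) require the monotonicity constant \(\lambda\) in \(c_1=\lambda[\omega]\) to be positive (the Fano-type case); the paper's footnote definition of monotone does not literally impose this, but the proof implicitly uses it, and you at least make the assumption \(\lambda>0\) explicit.
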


Now we 
explain the idea behind the proof of Theorem \ref{Thm:main}.\\
\textbf{Sketch of the Proof of Theorem \ref{Thm:main}}\\
Our main tool to prove Theorem \ref{Thm:main} is the ABBV Localization formula in equivariant cohomology. In particular, we analyze the ABBV Localization formula for Hamiltonian \(S^1\)-spaces, whose fixed submanifolds have at most dimension \(2\) (see Lemma \ref{Lem:ABBVsurface} and Corollary \ref{Cor:ABBVwith2}).\\
In section \ref{Sec:MMP} we define the pre-toric one-skeleton \(\mathcal{S}_{pre}\) for Hamiltonian \(T\)-spaces. This is a collection of \(T\)-invariant, smoothly embedded, symplectic \(2\)-spheres whose stabilizers are codimensional-\(1\) subtori of \(T\).
Moreover, let \(F_e\) be an isotropic submanifold that belongs to a fat edge of \(\phi(M)\). The Extension-Property \((\mathcal{P}_E)\) implies that \(F_e\) itself admits a 'special' toric one-skeleton  \(\mathcal{S}_{F_e}\). In particular \(\mathcal{S}_{F_e}\) is a collection of \(T\)-invariant \(2\)-spheres. We call
\begin{align*}
\mathcal{\bar{S}}_{F_e}= \left\lbrace  S\in \mathcal{S}_{F_e} \mid S \text{ is not fixed by the \(T\)-action}\right\rbrace 
\end{align*}
a reduced toric one-skeleton of \(F_e\) (see Section \ref{Sec:dim4}).
By  \(M_2^T\) we denote  the set of all \(2\)-dimensional components of \(M^T\). \\
Finally, we will use the ABBV formula, in particular Corollary \ref{Cor:ABBVwith2}, to show that the union 
\begin{align*}
\mathcal{S} = \mathcal{S}_{pre} \bigcup M_2^T \bigcup \left(  \bigcup_{e \text{ is a fat edge} } \mathcal{\bar{S}}_{F_e}  \right) 
\end{align*}
is a toric one-skeleton for \(\tham\).\\

\underline{Acknowledgements.} This work is part of the SFB/TRR 191 `Symplectic Structure in Geometry, Algebra and Dynamics`, funded by the DFG. I would like to thank Silvia Sabatini and Daniele Sepe for very helpful discussions and comments.

\end{section}

\begin{section}{Basic properties of Hamiltonian \(T\)-spaces}\label{Sec:BasicPropHmat}
Let \(T\) be a torus.  We denote its Lie algebra by \(\text{Lie}(T)\) and its lattice by 
\begin{align*}
\ell_T=\ker(\exp\colon \text{Lie}(T)\rightarrow T).
\end{align*}
Moreover, the dual Lie algebra of \(T\) is \((\text{Lie}(T))^*\) and the dual lattice is \(\ell_T^*\).\\
For a Hamiltonian \(T\)-space \(\tham\),  we denote the set of \(T\)-fixed points by \(M^T\) and for any subgroup \(H\) of \(T\) we denote the  set of \(H\)-fixed points by \(M^H\).

\begin{lemma}\label{Lem:Submfd}\cite[Lemma 5.53]{DuffSalm}
Let \(\tham\) be a Hamiltonian \(T\)-space and let \(H\) be a closed subgroup of \(T\). Then any connected component of \(M^H\) is a closed symplectic submanifold of \((M,\omega)\).
\end{lemma}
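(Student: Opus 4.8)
The plan is to prove the two assertions—that each component is a submanifold and that it is symplectic—separately, working locally near a fixed point and exploiting that $H$, being a closed subgroup of the torus $T$, is itself compact and hence admits averaging against its Haar measure. Fix a point $p\in M^H$ and write $\rho\colon H\to\operatorname{Diff}(M)$ for the action. The key structural input is that $H$ acts by symplectomorphisms, since the ambient $T$-action is symplectic (indeed Hamiltonian); in particular the linear isotropy representation $\od\rho_p\colon H\to\operatorname{GL}(T_pM)$ preserves the symplectic form $\omega_p$ on $T_pM$.

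First I would establish that $M^H$ is a smooth embedded submanifold, using Bochner's linearisation theorem. Averaging an arbitrary Riemannian metric over $H$ produces an $H$-invariant metric $g$, and its exponential map $\exp_p\colon T_pM\to M$ is then equivariant, intertwining $\od\rho_p$ with the $H$-action on $M$. Since $\exp_p$ is a local diffeomorphism at $0$ and is $H$-equivariant, it carries a neighbourhood of $0$ in the fixed linear subspace $(T_pM)^H$ diffeomorphically onto a neighbourhood of $p$ in $M^H$. This exhibits $M^H$ near $p$ as a submanifold with tangent space $T_p(M^H)=(T_pM)^H$; repeating the argument at every fixed point shows each connected component is embedded. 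Because $M^H$ is a closed subset of the compact manifold $M$, these components are closed.

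The symplectic part is the heart of the matter, and I expect the nondegeneracy to be the main point one must secure. It suffices to check that $(T_pM)^H$ is a symplectic subspace of $(T_pM,\omega_p)$, since then $\omega$ restricts to a closed, fibrewise-nondegenerate $2$-form on each component. Here I would average an $\omega_p$-compatible linear complex structure over $H$ to obtain an $H$-invariant compatible complex structure $J$ on $T_pM$; invariance means $J$ commutes with $\od\rho_p(h)$ for every $h\in H$, so $J$ preserves the fixed subspace $(T_pM)^H$. Consequently, for any nonzero $v\in (T_pM)^H$ the vector $Jv$ also lies in $(T_pM)^H$, and $\omega_p(v,Jv)=g_J(v,v)>0$, where $g_J=\omega_p(\cdot,J\cdot)$ is the associated inner product. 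Thus $\omega_p$ is nondegenerate on $(T_pM)^H$, which completes the argument. The only delicate step is ensuring the complex structure used to detect nondegeneracy is itself $H$-invariant—without that, $J$ need not preserve the fixed subspace—and this is exactly what the compactness-driven averaging over $H$ provides.
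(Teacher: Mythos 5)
The paper itself offers no proof of this lemma: it is quoted verbatim from McDuff--Salamon \cite[Lemma 5.53]{DuffSalm}. Your argument follows the same standard route as that reference (Bochner linearisation via an invariant metric to identify $M^H$ near $p$ with a neighbourhood of $0$ in $(T_pM)^H$, then nondegeneracy of $\omega_p$ on $(T_pM)^H$ via an invariant compatible complex structure), and the submanifold half of your proof is correct as written. However, the step you yourself single out as the delicate one fails as literally stated: you cannot obtain the invariant $J$ by ``averaging an $\omega_p$-compatible linear complex structure over $H$''. The set of $\omega_p$-compatible complex structures is not convex in $\operatorname{End}(T_pM)$, because the condition $J^2=-\operatorname{Id}$ is destroyed by convex combinations. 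Already on $(\R^2,\omega_{std})$ the two compatible structures $J_1=\bigl(\begin{smallmatrix}0&-1\\1&0\end{smallmatrix}\bigr)$ and $J_2=\bigl(\begin{smallmatrix}1&-2\\1&-1\end{smallmatrix}\bigr)$ have average whose square is $-\tfrac{5}{4}\operatorname{Id}$; and one gets an honest equivariant counterexample from the $\Z/2$-swap action on $\R^2\oplus\R^2$ with $J=J_1\oplus J_2$, whose Haar average $\tfrac12(J_1+J_2)\oplus\tfrac12(J_1+J_2)$ is not a complex structure. So the Haar average $\int_H \od\rho_p(h)\,J\,\od\rho_p(h)^{-1}\,dh$ gives you no $J$ with which to run the nondegeneracy argument.

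The repair is standard, and is what the cited reference actually does: average an \emph{inner product}, not the complex structure. Inner products form a convex cone, so $g=\int_H(\od\rho_p(h))^{*}g_0\,dh$ is an $H$-invariant inner product on $T_pM$; now define $A\in\operatorname{End}(T_pM)$ by $\omega_p(u,v)=g(Au,v)$ and set $J=A(A^{*}A)^{-1/2}$ (adjoint taken with respect to $g$). This $J$ is $\omega_p$-compatible, and since the polar-decomposition construction is natural in the pair $(\omega_p,g)$, both of which are $H$-invariant, $J$ commutes with $\od\rho_p(h)$ for every $h\in H$; the rest of your argument then goes through verbatim. Alternatively, you can bypass complex structures entirely by averaging a \emph{vector}: given $0\neq v\in(T_pM)^H$, nondegeneracy of $\omega_p$ on $T_pM$ yields $u$ with $\omega_p(v,u)\neq 0$, and its average $\bar u=\int_H \od\rho_p(h)u\,dh$ lies in $(T_pM)^H$ and satisfies
\[
\omega_p(v,\bar u)=\int_H\omega_p(v,\od\rho_p(h)u)\,dh=\int_H\omega_p(\od\rho_p(h)^{-1}v,u)\,dh=\omega_p(v,u)\neq 0,
\]
because each $\od\rho_p(h)$ is symplectic and fixes $v$. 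This proves directly that $(T_pM)^H$ is a symplectic subspace, which is all that your final step requires.
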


So we call the connected components of \(M^T\) the \textbf{fixed submanifolds}. Moreover if \(H\) is a subtours of \(T\), then the quotient torus \(T/H\) acts on the connected components of \(M^H\) in Hamiltonian fashion.

Next, we  recall the local normal form near fixed points. Therefore,  we fix a \(T\)-invariant almost complex structure \(J \colon TM \rightarrow TM\) which is compatible with \(\omega\). Hence, for each \(p\in M^T\) we have an \(\C\)-linear \(T\)-representation on \((T_pM,J_p)\cong \C^n\). The weights \(\alpha_{p,1},\dots, \alpha_{p,n}\in \ell_T^*\) of this representation are called the \textbf{isotropic weights} at \(p\). The following theorem by Guillemin-Sternberg \cite{physik} states that small neighborhoods of fixed points are determined by their isotropic weights up to \(T\)-invariant symplectomorphims.

\begin{theorem} \label{locNormForms}(local normal forms near fixed points)
	Let \(\tham\) be a Hamiltonian \(T\)-space of dimension \(2n\). Let \(p\in M^T\) be a fixed point with weights \(\alpha_{p,1},\dots, \alpha_{p,n}\in \ell_T^*\). Then there exists a neighborhood \(U_p\) of \(p\) with complex coordinates \(z_1,\dots,z_n\) centered at \(p\) such that
	\begin{align*}
	&\bullet \text{the symplectic form is }\omega=\frac{i}{2}\sum_{j=1}^{n} \text{d}z_j \wedge \text{d}\bar{z_j},\\
	&\bullet \text{the \(T\)-action is }  \exp(\xi)\cdot(z_1,\dots,z_n)=(\text{e}^{2\pi i \alpha_{p,1}(\xi)}z_1,\dots,\text{e}^{2\pi i \alpha_{p,n}(\xi)}z_n) \text{ for }\xi \in \text{Lie}(T),\\ 
	&\bullet \text{the moment map is } \phi(z)=\frac{1}{2} \sum_{j=1}^{n} \alpha_{p,j} \vert z_j \vert^2  + \phi(p).
	\end{align*}
\end{theorem}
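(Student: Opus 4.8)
The plan is to reduce the local geometry at $p$ to a linear model on the tangent space $T_pM$, carrying out the reduction $T$-equivariantly in two stages: first linearize the action, then normalize the symplectic form by an equivariant Moser argument. Once both the action and $\omega$ are in standard form, the moment map is forced by its defining equation together with a one-line integration.

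First I would exploit the compactness of $T$. Averaging an arbitrary Riemannian metric over $T$ produces a $T$-invariant metric, and since $p$ is fixed the geodesic exponential map $\exp_p\colon T_pM\to M$ is $T$-equivariant with respect to the linear isotropy representation on $T_pM$; thus it restricts to a $T$-equivariant diffeomorphism from a ball around $0\in T_pM$ onto a neighborhood of $p$ (Bochner linearization). Restricting the fixed $T$-invariant compatible $J$ to $T_pM$ turns $(T_pM,J_p)$ into a complex $T$-representation $\cong\C^n$, and since $T$ is abelian it splits into weight spaces for the isotropic weights $\alpha_{p,1},\dots,\alpha_{p,n}\in\ell_T^*$. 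Because the Hermitian structure $\omega_p(\cdot,J_p\cdot)$ is $T$-invariant, the weight spaces are Hermitian-orthogonal, so I may pick a unitary basis adapted to the weight decomposition. In the resulting complex coordinates $z_1,\dots,z_n$ the action is diagonal, $\exp(\xi)$ sending $z_j\mapsto e^{2\pi i\alpha_{p,j}(\xi)}z_j$, and the constant form $\omega_0=\tfrac{i}{2}\sum_j \mathrm{d}z_j\wedge \mathrm{d}\bar z_j$ is $T$-invariant and agrees at the origin with $\omega_1:=\exp_p^*\omega$.

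The hard part will be the equivariant normalization of the symplectic form. Both $\omega_0$ and $\omega_1$ are closed, $T$-invariant, and equal at $0$, so I would run Moser's method along $\omega_t=(1-t)\omega_0+t\omega_1$. The two places where equivariance could fail are both repaired by averaging over $T$: the relative Poincar\'e lemma gives a primitive of $\omega_1-\omega_0$ vanishing to second order at $0$, which I average to a $T$-invariant primitive $\sigma$; solving $\iota_{X_t}\omega_t=-\sigma$ then yields a $T$-invariant Moser vector field $X_t$ that vanishes at $0$, so its time-one flow $\psi$ is a $T$-equivariant diffeomorphism fixing the origin with $\psi^*\omega_1=\omega_0$. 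Composing, $\exp_p\circ\,\psi$ furnishes coordinates in which $\omega=\omega_0$ while the action is still the diagonal linear action above; the key point is that $\psi$ fixes $0$ and is equivariant, so it does not disturb the linearization achieved in the first stage.

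Finally I would read off the moment map. The fundamental vector field of $\xi$ is $X_\xi=\sum_j 2\pi\alpha_{p,j}(\xi)\,(x_j\partial_{y_j}-y_j\partial_{x_j})$ with $z_j=x_j+iy_j$, so contracting into $\omega_0=\sum_j \mathrm{d}x_j\wedge \mathrm{d}y_j$ shows $-\iota_{X_\xi}\omega_0$ is exact, equal to $\mathrm{d}$ of a quadratic function proportional to $\sum_j\alpha_{p,j}(\xi)|z_j|^2$, the constant being fixed by the chosen conventions for $\exp$ and the weight pairing. Comparing with $\mathrm{d}\langle\phi,\xi\rangle=-\iota_{X_\xi}\omega$ and integrating in $\xi$ gives $\phi(z)=\tfrac{1}{2}\sum_j\alpha_{p,j}|z_j|^2+\phi(p)$, where the additive constant is pinned down by evaluating at $z=0$. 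I expect the genuine difficulty to lie entirely in the equivariant Moser step; the linearization and the moment-map computation are routine once compactness of $T$ is in hand.
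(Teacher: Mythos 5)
The paper itself contains no proof of this theorem: it is quoted as a known result of Guillemin--Sternberg \cite{physik}, so there is no in-paper argument to compare against. Your proposal is the standard proof of that result --- Bochner linearization via an averaged invariant metric and the equivariant geodesic exponential map, unitary diagonalization of the isotropy representation into weight spaces, an equivariant Moser deformation carrying $\exp_p^*\omega$ to the constant form (with the primitive made invariant by averaging, and the Moser field vanishing at the origin so that the flow fixes $0$ and preserves the linearized action), and finally integration of $-\iota_{X_\xi}\omega_0$ to recover the moment map --- and it is correct; it is essentially the argument given in the cited reference. One point you should make explicit rather than leave to ``the chosen conventions'': with the normalizations exactly as in the statement (action through $e^{2\pi i\alpha_{p,j}(\xi)}$, form $\omega_0=\tfrac{i}{2}\sum_j \mathrm{d}z_j\wedge \mathrm{d}\bar z_j$, and $\mathrm{d}\langle\phi,\xi\rangle=-\iota_{X_\xi}\omega$), your own contraction computation gives
\begin{align*}
-\iota_{X_\xi}\omega_0=\mathrm{d}\Bigl(\pi\sum_{j=1}^{n}\alpha_{p,j}(\xi)\,\vert z_j\vert^2\Bigr),
\end{align*}
so the moment map comes out as $\phi(z)=\pi\sum_j\alpha_{p,j}\vert z_j\vert^2+\phi(p)$; the coefficient $\tfrac12$ in the theorem corresponds instead to the convention $e^{i\alpha_{p,j}(\xi)}$ for the action. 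This mismatch is a normalization slip in the statement itself (common in the literature, and immaterial for how the theorem is used in this paper), not a gap in your argument, but a complete write-up should pin down one convention and carry it through consistently.
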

	Note that for each fixed submanifold \(F\) and any \(p,q\in F\) the isotropic weights at \(p\) and \(q\) are equal. So we call these weights the \textbf{isotropic weights at \(F\)} and we denote them by \(\alpha_{F,1},\dots \alpha_{F,n}\). Moreover, the dimension of \(F\) is equal to twice the number of weights \(\alpha_{F,1},\dots \alpha_{F,n}\) which are equal to zero.
	
	\begin{lemma}\label{Lem: ComplexityDim}
	Let \(\tham\) be a complexity-\(k\) space. Then the dimension of each fixed submanifold is at most \(2k\).
	\end{lemma}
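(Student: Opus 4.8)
The plan is to read off the dimension of a fixed submanifold from the local model of Theorem~\ref{locNormForms} and to control the number of vanishing isotropic weights by invoking effectiveness of the \(T\)-action.

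First I would fix a connected component \(F\) of \(M^T\) together with a point \(p\in F\), and consider its isotropic weights \(\alpha_{F,1},\dots,\alpha_{F,n}\in\ell_T^*\), where \(\dim(M)=2n\). Since \(\tham\) has complexity \(k\), we have \(\dim(T)=n-k\). By the observation following Theorem~\ref{locNormForms}, the dimension of \(F\) equals twice the number of indices \(j\) with \(\alpha_{F,j}=0\); writing \(r\) for the number of \emph{nonzero} weights, this means \(\dim(F)=2n-2r\). Hence the assertion \(\dim(F)\le 2k\) is equivalent to the inequality \(r\ge n-k=\dim(T)\), and this in turn will follow once I show that the weights \(\alpha_{F,1},\dots,\alpha_{F,n}\) span the \((n-k)\)-dimensional space \((\text{Lie}(T))^*\) over \(\R\).

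The key step is therefore to derive this spanning property from effectiveness. Suppose, for contradiction, that the weights spanned a proper subspace \(W\subsetneq(\text{Lie}(T))^*\). Then I can pick a nonzero \(\xi\in\text{Lie}(T)\) annihilating \(W\), so that \(\alpha_{F,j}(\xi)=0\) for every \(j\). In the coordinates of Theorem~\ref{locNormForms} the element \(\exp(t\xi)\) then acts as the identity on the neighbourhood \(U_p\) for all \(t\in\R\). Let \(H=\overline{\{\exp(t\xi)\mid t\in\R\}}\) be the subtorus of \(T\) thus generated; it is nontrivial because \(\xi\neq 0\), and it fixes \(U_p\) pointwise. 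By Lemma~\ref{Lem:Submfd} each component of \(M^H\) is a closed symplectic submanifold, and the component meeting \(U_p\) has full dimension \(2n\), hence is open; being also closed it equals \(M\) (using that \(M\) is connected). Thus the nontrivial subtorus \(H\) acts trivially on \(M\), contradicting effectiveness.

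Assembling these steps, the weights must span \((\text{Lie}(T))^*\), so at least \(\dim(T)=n-k\) of them are nonzero, i.e.\ \(r\ge n-k\), whence \(\dim(F)=2n-2r\le 2k\). I expect the only point needing care to be the passage from ``\(H\) fixes the open set \(U_p\)'' to ``\(H\) fixes all of \(M\)'', which rests on the connectedness of \(M\) and on Lemma~\ref{Lem:Submfd}; the remainder is a direct dimension count in the local normal form.
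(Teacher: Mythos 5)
Your proof is correct and follows essentially the same route as the paper: both arguments reduce the claim to the fact that effectiveness forces the isotropic weights at \(F\) to span \((\text{Lie}(T))^*\), so that at most \(k\) of them vanish, and then use that \(\dim(F)\) equals twice the number of vanishing weights. The only difference is that you supply a justification of the spanning property (via the local normal form, Lemma~\ref{Lem:Submfd}, and connectedness of \(M\)), a step the paper simply asserts as an immediate consequence of effectiveness.
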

\begin{proof}
Let dim(\(M\))=\(2n\), so the complexity of \(\tham\) is \(k=n-d\), where dim(\(T\))=\(d\). Now let \(F\) be a fixed submanifold and let \(\alpha_{F,1}, \dots, \alpha_{F,n}\) be the isotropic weights at \(F\). Since the \(T\)-action on \(M\) is effective, we have that the \(\Z\)-span of \(\alpha_{F,1}, \dots, \alpha_{F,n}\) is equal to \(\ell_T^*\cong \Z^d\). Hence, at most \(k\) of isotropic weights \(\alpha_{F,1}, \dots, \alpha_{F,n}\) at \(F\) are equal to zero.
Since the dimension of \(F\) is equal  to twice the number of isotropic weights at \(F\) that are zero, we conclude \(dim(F)\leq 2k\).
\end{proof}

	\begin{subsection}{The restriction to a subcircle}\label{sec:intermezzo}
	Let \(T\) be a real torus and let \(\bar{\xi} \in \text{Lie}(T)\), such that \(S^1=\exp(\R \bar{\xi})\) is a subcircle of \(T\). Let \(\ell_T^*\) be the dual lattice of \(T\) and \(\ell_{S^1}^*\cong \Z\) be the dual lattice of \(\text{Lie}(S^1) \subset \text{Lie}(T)\). Note that we have a natural restriction map
	\begin{align}
	\Phi \colon \ell_T^* \longrightarrow \ell_{S^1}^* \cong \Z,
	\end{align}
	which is induced by the inclusion \(\text{Lie}(S^1)\rightarrow \text{Lie}(T)\).
	In particular, let \(V\) be a complex vector space of complex dimension \(n\), such that \(T\) acts on \(V\) by \(\text{GL}(V)\) with weights \(\alpha_1,\dots ,\alpha_n \in \ell_T^*\). Then \(S^1\) acts on \(V\) with weights \(\Phi(\alpha_1),\dots, \Phi(\alpha_n )\in \ell_{S^1}^*\).
	\end{subsection}
	
	\begin{definition}\label{Def:generic}
	Let \(\tham\) be a Hamiltonian \(T\)-space. We call \(\bar{\xi}\in \text{Lie}(T)\)  \textbf{generic}, if \(S^1=\exp(\R \bar{\xi})\) is a subcircle of \(T\) and \(M^{S^1}=M^T\). 
 	\end{definition}
 
 \begin{remark}\label{Rem:generic}
 Since we assume a Hamiltonian \(T\)-space to be compact, we have that for any Hamiltonian \(T\)-space there is a generic \(\bar{\xi}\in \text{Lie}(T)\).
 \end{remark}

	\end{section}

\begin{section}{Equivariant Cohomology}\label{Sec:EC}
	
	In this section we review some facts about \(S^1\)-equivariant cohomology. (For a detailed introduction, see for example \cite{AB} and \cite{supersymme}). Moreover, we analyze the ABBV Localization formula for Hamiltonian \(S^1\)-spaces, whose fixed submanifolds have at most dimension \(2\), which is the key ingredient for the proof of Theorem \ref{Thm:main}.

	Let \(M\) be a manifold endowed with an \(S^1\)-action. In the Borel-model the \(S^1\)-equivariant cohomology of  \(M\) is defined as follows. The diagonal action of \(S^1\) on \(M\times S^{\infty}\) is free. By   \(M \times_{S^1} S^{\infty}\) we denote the orbit space. The \(S^1\)-equivariant cohomology ring of \(M\) is
	\begin{align*}
	H_{S^1}^*(M;R)\colon =H^*(M \times_{S^1} S^{\infty};R),
	\end{align*}
	where \(R\) is the coefficient ring.\\
	Let \(M^{S^1}\) be the set of fixed points and assume it is not empty. Let \(F\) be one of its connected components. The inclusion map \(i_F\colon F \rightarrow M\) is an \(S^1\)-equivariant  map, so it induces a map
	\begin{align*}
	i_F^*\colon H_{S^1}^*(M) \rightarrow H_{S^1}^*(F).
	\end{align*}
	Given \(\mu^{S^1}\in H_{S^1}^*(M)\), we write \(\mu^{S^1}\vert_F= i_F^* (\mu^{S^1})\). Moreover if \(P\in M^{S^1}\) is a point, then \(\mu^{S^1}(P)=i_{\left\lbrace P\right\rbrace }^* (\mu^{S^1})\). Note that 
	\begin{align}
	H_{S^1}^*(F)= H^*(F) \otimes H^*(\C P^\infty)=H^*(F)\left[ x\right] ,
	\end{align}
	
	where \(x\) has degree \(2\).
	\begin{remark}
		Let \(P,Q\in F\), where \(F\) is a connected component of \(M^{S^1}\), then \(\mu^{S^1}(P)=\mu^{S^1}(Q)\). In particular, let 
		\begin{align}
		\pi_F\colon H_{S^1}^*(F)= H^*(F) \otimes H^*(\C P^\infty) \longrightarrow  H^*(\C P^\infty)
		\end{align}
		be the natural projection, then \(\pi_F(\mu^{S^1}\vert_F)=\mu^{S^1}(P)\) for all \(P\in F\).
		
	\end{remark}
	Moreover, the projection \(\ M \times_{S^1} S^\infty\ \rightarrow \C P^\infty \) induces a push-forward map in equivariant cohomology
	
	\begin{align}
	H_{S^1}^*(M)\rightarrow H^{*-\operatorname{dim}(M)}(\C P^\infty),
	\end{align}
	which can be seen as integration along the fibers. So we denote it by \(\int_M\). The following theorem, due to Atiyah-Bott and Berline-Vergne (see \cite{AB}, \cite{BV}) gives a formula for the map \(\int_M\) in terms of fixed point set data.
	
	\begin{theorem}\label{Thm:ABBV}(ABBV Localization formula) Let \(M\) be a compact oriented manifold endowed with a smooth \(S^1\)-action. Given \(\mu^{S^1}\in H_{S^1}^* (M;\Q) \)
		\begin{align*}
		\int_M \mu^{S^1} \,=\, \sum_{F\subset M^{S^1}} \int_F \dfrac{\mu^{S^1}\vert_F}{\text{e}^{S^1}(N_F)},
		\end{align*}
		where the sum runs over all connected components \(F\) of \(M^{S^1}\) and \(\text{e}^{S^1}(N_F)\) is the equivariant Euler class of the normal bundle
		\(N_F\) to \(F\).
	\end{theorem}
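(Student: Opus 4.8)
The plan is to establish the localization formula through the Cartan (de Rham) model of $S^1$-equivariant cohomology, following the Berline--Vergne strategy of exhibiting the integrand as an equivariantly exact form away from the fixed locus. Recall that in the Cartan model an equivariant class is represented by a polynomial $\mu^{S^1}=\sum_k x^k \mu_k$ with each $\mu_k$ an invariant form on $M$, the equivariant differential is $d_{S^1}=d-x\,\iota_X$ where $X$ is the fundamental vector field of the action, and the hypothesis that $\mu^{S^1}$ is a class means $d_{S^1}\mu^{S^1}=0$. The push-forward $\int_M$ is represented by integrating the top form-degree part over $M$.

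First I would fix an $S^1$-invariant Riemannian metric and, on the open set $M\setminus M^{S^1}$ where $X$ is nowhere zero, define the invariant $1$-form $\theta=\langle X,\cdot\rangle/|X|^2$, so that $\iota_X\theta=1$. Then $\beta:=d_{S^1}\theta=d\theta-x$ is equivariantly closed (one checks $\iota_X\,d\theta=0$ since $\theta$ is invariant), of even total degree, and its form-degree-zero part is the invertible $-x$; factoring $\beta=-x(1-d\theta/x)$ gives $\beta^{-1}=-x^{-1}\sum_{j\ge 0}(d\theta/x)^j$, a well-defined equivariant form on $M\setminus M^{S^1}$ because $d\theta$ is nilpotent. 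A short computation using $d_{S^1}\mu^{S^1}=0$, $d_{S^1}\beta^{-1}=0$, and the odd degree of $\theta$ then yields $d_{S^1}\bigl(\theta\wedge\mu^{S^1}\,\beta^{-1}\bigr)=\mu^{S^1}$ on $M\setminus M^{S^1}$, so the integrand is equivariantly exact there.

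Next I would delete an invariant tubular neighborhood $D_\epsilon(F)$ of radius $\epsilon$ around each component $F\subset M^{S^1}$ and apply Stokes' theorem on the complement, where $\mu^{S^1}$ is exact. This reduces $\int_M\mu^{S^1}$ to a sum of boundary integrals over the sphere bundles $\partial D_\epsilon(F)$, with appropriate orientation, in the limit $\epsilon\to 0$. The final and most substantial step is the local evaluation of each contribution: modelling a neighborhood of $F$ on its equivariant normal bundle $N_F\to F$ and choosing $\theta$ adapted to an invariant connection, one computes $\lim_{\epsilon\to 0}\int_{\partial D_\epsilon(F)}\theta\wedge\mu^{S^1}\,\beta^{-1}$ by a fibrewise integration over the shrinking normal sphere.

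The hard part is precisely this last local calculation, namely showing that the fibre integral produces $1/e^{S^1}(N_F)$. Concretely, $S^1$ acts on each normal fibre with nonzero weights, and the radial integral of $\theta\,\beta^{-1}$ over the sphere is a Mathai--Quillen/Gaussian-type computation whose value is the inverse of the product of the equivariant Chern roots of $N_F$, i.e. $e^{S^1}(N_F)^{-1}$; this gives $\int_F \mu^{S^1}|_F/e^{S^1}(N_F)$ for each component. Care is needed with orientations and with the fact that $e^{S^1}(N_F)$ is invertible only after inverting $x$, so the identity initially lives in the localized coefficient ring; one then observes that $\int_M\mu^{S^1}$ is polynomial in $x$ to conclude the stated equality. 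An alternative, purely cohomological route replaces the geometric steps by the Atiyah--Bott localization theorem: the restriction $H^*_{S^1}(M)\to H^*_{S^1}(M^{S^1})$ becomes an isomorphism after inverting $x$, since $H^*_{S^1}(M\setminus M^{S^1})$ is $x$-torsion, and combining this with the excess-intersection identity $i_F^*(i_F)_*=\,\cup\, e^{S^1}(N_F)$ and functoriality of the push-forward $\int_M\circ (i_F)_*=\int_F$ delivers the formula directly.
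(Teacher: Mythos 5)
You cannot be checked against the paper's own proof here, because the paper does not give one: Theorem \ref{Thm:ABBV} is quoted as a known result, with the proof deferred to Atiyah--Bott \cite{AB} and Berline--Vergne \cite{BV}. Judged on its own, your outline is a faithful reproduction of the Berline--Vergne argument. The reduction steps are correct: $\theta$ invariant with $\iota_X\theta=1$ makes $\beta=d_{S^1}\theta=d\theta-x$ equivariantly closed with invertible form-degree-zero part; the identity $d_{S^1}\bigl(\theta\wedge\mu^{S^1}\beta^{-1}\bigr)=\mu^{S^1}$ on $M\setminus M^{S^1}$ follows from the odd degree of $\theta$; and Stokes' theorem (together with $\int_{D_\epsilon(F)}\mu^{S^1}\to 0$ as $\epsilon\to 0$, which you should state explicitly, since it is what lets you pass from $M$ to the complement of the tubular neighborhoods) reduces $\int_M\mu^{S^1}$ to boundary integrals over the normal sphere bundles. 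You also correctly flag the two genuine subtleties: the fibrewise limit computation producing $e^{S^1}(N_F)^{-1}$, and the fact that the identity a priori lives in the ring with $x$ inverted, which is harmless because the left-hand side is polynomial in $x$. The one place where your text is a sketch rather than a proof is precisely the fibre computation: essentially all of the analytic content of \cite{BV} (weight-space decomposition of $N_F$, choice of invariant connection, the Gaussian/Mathai--Quillen integral, and the orientation bookkeeping) is concentrated there, and you only name it. The alternative you mention at the end --- $H^*_{S^1}(M\setminus M^{S^1};\Q)$ is $x$-torsion, combined with $i_F^*(i_F)_*=\cup\, e^{S^1}(N_F)$ and $\int_M\circ\,(i_F)_*=\int_F$ --- is exactly the Atiyah--Bott proof from \cite{AB}; either route, once the deferred step is filled in, establishes the theorem (working over $\R$ in the Cartan model is immaterial, since $\Q[x]$ injects into $\R[x]$).
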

In particular, if \(F=\left\lbrace P\right\rbrace \) is an isolated fixed point, then
\begin{align}
\int_F \dfrac{\mu^{S^1}\vert_F}{\text{e}^{S^1}(N_F)}= \dfrac{\mu^{S^1}(p)}{(w_{P,1}\cdot \dots \cdot w_{P,n})x^n},
\end{align} 
 where \(w_{P,1}, \dots , w_{P,n}\) are the weights\footnote{Note that the signs of the individual weights are not well-defined, but the sign of their product is. } of the \(S^1\)-representation at \(T_PM\). In the next lemma we analyze the term \(\int_F \frac{(\mu^{S^1}c_{n-1}^{S^1}(M))\vert_F}{\text{e}^{S^1}(N_F)}\) for the case that \(F=\Sigma\)  is a fixed surface.

	\begin{lemma}\label{Lem:ABBVsurface}
		Let \(\sham\) be a Hamiltonian \(S^1\)-space of dimension \(2n\) and let \(\Sigma \) be a fixed surface. Let \(w_1,\dots ,w_{n-1}\) be the weights of \(S^1\)-action on the normal bundle \(N_{\Sigma}\) of \(\Sigma\). Then

		\begin{align*}
		\int_\Sigma \dfrac{(\mu^{S^1}\cdot c_{n-1}^{S^1}(M))\vert_\Sigma}{\text{e}^{S^1}(N_\Sigma)}\quad = \quad \int_\Sigma \mu^{S^1}  \quad  + \quad  2(1-g_\Sigma) \frac{\pi_\Sigma(\mu^{S^1}\vert_\Sigma)}{x}{\sum_{i=1}^{n-1}\frac{1}{w_i}} ,
		\end{align*}
		
		for \( \mu^{S^1} \in H_{S^1}^2(M;\Z) \), where \(g_\Sigma\) is the genus of \(\Sigma\).

	\end{lemma}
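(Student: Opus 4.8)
The plan is to compute the integrand directly on $\Sigma$ by restricting the equivariant tangent bundle and applying the splitting principle. First I would use the $S^1$-equivariant Whitney decomposition $TM|_\Sigma = T\Sigma \oplus N_\Sigma$. Since $S^1$ acts trivially on $\Sigma$, the equivariant total Chern class of $T\Sigma$ coincides with the ordinary one, $1 + c_1(T\Sigma)$, while $N_\Sigma$ decomposes into its $S^1$-isotypic subbundles. Applying the splitting principle to each isotypic piece introduces Chern roots $r_1,\dots,r_{n-1}\in H^2(\Sigma)$ together with their weights $w_1,\dots,w_{n-1}$, so that $\text{e}^{S^1}(N_\Sigma)=\prod_{i=1}^{n-1}(r_i+w_ix)$ and $c^{S^1}(N_\Sigma)=\prod_{i=1}^{n-1}(1+r_i+w_ix)$.

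Next I would extract the degree-$(n-1)$ part of $c^{S^1}(TM|_\Sigma)=(1+c_1(T\Sigma))\prod_{i}(1+b_i)$, where $b_i=r_i+w_ix$. This part equals $\sigma_{n-1}(b)+c_1(T\Sigma)\,\sigma_{n-2}(b)$, with $\sigma_k$ the $k$-th elementary symmetric polynomial in the $b_i$. Since $\sigma_{n-1}(b)=\prod_i b_i=\text{e}^{S^1}(N_\Sigma)$ and $\sigma_{n-2}(b)=\sum_{j}\prod_{i\neq j}b_i$, dividing by the Euler class (invertible after inverting $x$ over $\Q$) gives
\[
\frac{c_{n-1}^{S^1}(M)|_\Sigma}{\text{e}^{S^1}(N_\Sigma)} = 1 + c_1(T\Sigma)\sum_{j=1}^{n-1}\frac{1}{b_j}.
\]

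The crucial simplification comes from $\dim\Sigma=2$: every product of two classes in $H^2(\Sigma)$ vanishes, so $c_1(T\Sigma)\cdot r_j=0$. Expanding $1/b_j=(w_jx)^{-1}(1-r_j/(w_jx)+\cdots)$ then shows $c_1(T\Sigma)/b_j=c_1(T\Sigma)/(w_jx)$, whence the quotient collapses to $1+\frac{c_1(T\Sigma)}{x}\sum_{j}1/w_j$. Writing $\mu^{S^1}|_\Sigma=m+cx$ with $m\in H^2(\Sigma)$ and $c\in\Z$, so that $\pi_\Sigma(\mu^{S^1}|_\Sigma)=cx$, the same vanishing $m\cdot c_1(T\Sigma)=0$ leaves the integrand equal to $m+cx+c\,c_1(T\Sigma)\sum_j 1/w_j$. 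Integrating over the fibre $\Sigma$, using that $\int_\Sigma$ kills the purely-$x$ term and that $\int_\Sigma c_1(T\Sigma)=\chi(\Sigma)=2(1-g_\Sigma)$, yields $\int_\Sigma m+2(1-g_\Sigma)\,c\sum_j 1/w_j$, which is exactly the claimed formula since $\int_\Sigma\mu^{S^1}=\int_\Sigma m$ and $c=\pi_\Sigma(\mu^{S^1}|_\Sigma)/x$.

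The main obstacle is entirely bookkeeping: one must work in the localization of $H_{S^1}^*(\Sigma)\otimes\Q$ in which the Euler class is invertible, and keep careful track of which products vanish for degree reasons on the surface. Once the relation $c_1(T\Sigma)\,r_j=0$ is exploited, the division by the Euler class — which a priori produces an infinite geometric series — truncates after one step, and the whole expression reduces to the two stated terms.
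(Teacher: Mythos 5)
Your proposal is correct and takes essentially the same route as the paper: decompose $TM\vert_\Sigma = T\Sigma \oplus N_\Sigma$ equivariantly, split $N_\Sigma$ into equivariant line pieces with first Chern classes $r_j + w_j x$, use the degree truncation on the surface to collapse $c_{n-1}^{S^1}(M)\vert_\Sigma / \mathrm{e}^{S^1}(N_\Sigma)$ to $1 + \frac{c_1(T\Sigma)}{x}\sum_{j} \frac{1}{w_j}$, and then multiply by $\mu^{S^1}\vert_\Sigma$ and integrate. The one point to state more carefully is why the roots $r_j$ lie in $H^2(\Sigma)$ itself: the formal splitting principle places them in a flag bundle, where the relation $c_1(T\Sigma)\, r_j = 0$ can fail (one would need to first rewrite the expression in terms of symmetric functions that descend to $\Sigma$); the clean justification, which is exactly what the paper invokes, is that every complex vector bundle over a compact surface genuinely splits into a direct sum of line bundles, so the $r_j$ are honest classes on $\Sigma$ and your degree argument applies verbatim.
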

	
	\begin{proof}
		We have \(H_{S^1}^*(\Sigma; \Z)=H^*(\Sigma; \Z)\left[ x \right] \). We denote by \(u_\Sigma\) the positive\footnote{with respect to the orientation of \(\Sigma\) induced by the symplectic form \(\omega\)} generator of \(H^2(\Sigma; \Z)\).
		
		First, we compute the  total equivariant Chern class \(c^{S^1}(N_\Sigma)\) of the normal bundle \(N_\Sigma\)
		of \(\Sigma\) in terms of  \(u_\Sigma\) and the weights \(w_1,\dots, w_{n-1}\) . The normal bundle \(N_\Sigma\) is an \(S^1\)-equivariant complex vector bundle over \(\Sigma\) of rank \(n-1\). 
		Due to the fact that each complex vector bundle over a compact surface splits into the direct sum of complex line bundles and that the \(S^1\)-action on \(\Sigma\) is trivial, we have that \(N_\Sigma\) splits into the direct sum of \(S^1\)-equivariant line bundles \(L_1, \dots, L_{n-1}\), such that for all \(j=1,\dots,n-1\)
		the first equivariant Chern class \(c_1^{S^1}(L_j)\) of \(L_j\) is of the form  
		\begin{align}
c_1^{S^1}(L_j)= w_jx+A_j u_\Sigma
		\end{align}
		
		for some \(A_j \in \Z\). Hence, the total \(S^1\)-equivariant Chern class of \(N_\Sigma\) is
		
		\begin{align}
		c^{S^1}(N_\Sigma)=\prod_{i=1}^{n-1}\left( 1+c_1^{S^1}(L_i) \right) =\prod_{i=1}^{n-1}\left( 1+w_ix+A_i u_\Sigma\right).
		\end{align}
		 Note that \(u_\Sigma\cdot u_\Sigma=0\) in \(H_{S^1}^*(\Sigma; \Z)\), so the equivariant Euler class of \(N_\Sigma \) is
		\begin{align}
		e^{S^1}(N_\Sigma)=c_{n-1}^{S^1}(N_\Sigma)=\prod_{i=1}^{n-1}\left( w_ix+A_i u_\Sigma\right) =\prod_{i=1}^{n-1}\left( w_ix\right) \left( 1+\sum_{i=1}^{n-1} \dfrac{A_i}{w_ix}u_\Sigma\right) 
		\end{align}
		
		and the formal inverse of \(e^{S^1}(N_\Sigma)\) is
		\begin{align}
		\dfrac{1}{e^{S^1}(N_\Sigma)}= \dfrac{1}{\prod_{i=1}^{n-1}\left( w_ix\right)} \left( 1-\sum_{i=1}^{n-1} \dfrac{A_i}{w_ix}u_\Sigma\right).
		\end{align}
		Next, we have
		\begin{align}
		c_{n-2}^{S^1}(N_\Sigma)=\sum_{j=1}^{n-1}\left( \prod_{i=1, i\neq j}^{n-1}\left( w_ix+A_i u_\Sigma\right)\right)  =\sum_{j=1}^{n-1} \left( \prod_{i=1, i\neq j}^{n-1}\left( w_ix\right) \left( 1+\sum_{i=1,i\neq j}^{n-1} \dfrac{A_i}{w_ix}u_\Sigma\right) \right). 
		\end{align}
		Moreover, since the \(S^1\)-action on \(T\Sigma\) is trivial, the total equivariant Chern class of \(T\Sigma\) is \(c^{S^1}(\Sigma)=1+2(1-g_\Sigma)u_\Sigma\). Now we use \(c^{S^1}(M)\vert_\Sigma=c^{S^1}(\Sigma) c^{S^1}(N_\Sigma)\), so we obtain
		\begin{align}
		c_{n-1}^{S^1}(M)\vert_\Sigma = e^{S^1}(N_\Sigma) + 2(1-g_\Sigma)   \left( \sum_{j=1}^{n-1} \prod_{i=1, i\neq j}^{n-1}\left( w_i\right)\right) x^{n-2}u_\Sigma
		\end{align}
		
		and
		
		\begin{align}
		\dfrac{c_{n-1}^{S^1}(M)\vert_\Sigma}{e^{S^1}(N_\Sigma)} =1 + \quad 2(1-g_\Sigma) \dfrac{u_\Sigma}{x} {\sum_{i=1}^{n-1} \frac{1}{w_i}}.
		\end{align}
		
		Now let \(\mu^{S^1}\in H_{S^1}^*(M,\Z)\), then 
		
		\begin{align}
		\mu^{S^1}\vert_\Sigma= \mu\vert_\Sigma + \pi_\Sigma(\mu^{S^1}\vert_\Sigma),
		\end{align}
		 where \(\mu\in H^2(M;\Z)\) is image of \(\mu^{S^1}\) under the restriction map \(H_{S^1}^2(M;\Z)\rightarrow H^2(M;\Z) \). We conclude that 
		\begin{align}
		\dfrac{\mu^{S^1}\cdot c_{n-1}^{S^1}(M)\vert_\Sigma}{e^{S^1}(N_\Sigma)} \quad = \quad \mu \vert_\Sigma +\quad 2(1-g_\Sigma) \dfrac{\pi_\Sigma(\mu^{S^1}\vert_\Sigma)}{x}\cdot u_\Sigma \left( \sum_{i=1}^{n-1}\dfrac{1}{w_i}\right) \quad + \quad \pi_\Sigma(\mu^{S^1}\vert_\Sigma).
		\end{align}
		
		So the claim follows.
		
	\end{proof}
	
	As a consequence of the ABBV Localization formula and of Lemma \ref{Lem:ABBVsurface}, we obtain the following corollary.
	
	\begin{corollary}\label{Cor:ABBVwith2}
		Let \(\sham\) be a Hamiltonian \(S^1\)-space of dimension \(2n\), such that \(M^{S^1}\) consists just of fixed \(2\)-spheres \(\Sigma_1,\dots,\Sigma_N\) and isolated fixed points \(P_1,\dots, P_K\). For all \(i=1,\dots,N\), let \(w_{\Sigma_i,1},\dots, w_{\Sigma_i,n-1} \) be the weights of the \(S^1\)-action on the normal bundle of  \(\Sigma_i\). Moreover, for all \(j=1,\dots,K\), let \(w_{P_j,1},\dots, w_{P_j,n} \) be the weights of the \(S^1\)-action on \(T_{P_j}M\). Then

		\begin{align}
		\int_M \mu^{S^1}\cdot c_{n-1}^{S^1}(M)  = & \sum_{i=1}^{N}\left( \int_{\Sigma_i} \mu^{S^1} + 2 \dfrac{\pi_{\Sigma_i}(\mu^{S^1}\vert_{\Sigma_i})}{x} \left( \frac{1}{w_{\Sigma_i,1}}+\dots+ \frac{1}{w_{\Sigma_i,n-1}}\right) \right) \\
		& + \sum_{j=1}^{K}  \left(  \dfrac{\mu^{S^1}(P_j)}{x} \left( \frac{1}{w_{P_j,1}}+\dots+ \frac{1}{w_{P_j,n}}\right)\right)  
		\end{align}
		
		for \(\mu^{S^1}\in H_{S^1}^2(M;\Z)\)
	\end{corollary}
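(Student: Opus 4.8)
The plan is to apply the ABBV Localization formula (Theorem \ref{Thm:ABBV}) to the class $\mu^{S^1}\cdot c_{n-1}^{S^1}(M) \in H_{S^1}^*(M;\Q)$, and then to evaluate the contribution of each connected component of $M^{S^1}$ separately. By hypothesis $M^{S^1}$ decomposes into two types of components: the fixed $2$-spheres $\Sigma_1,\dots,\Sigma_N$ and the isolated fixed points $P_1,\dots,P_K$, so the localization sum splits as
\begin{align*}
\int_M \mu^{S^1}\cdot c_{n-1}^{S^1}(M) = \sum_{i=1}^N \int_{\Sigma_i} \frac{(\mu^{S^1}\cdot c_{n-1}^{S^1}(M))\vert_{\Sigma_i}}{e^{S^1}(N_{\Sigma_i})} + \sum_{j=1}^K \frac{(\mu^{S^1}\cdot c_{n-1}^{S^1}(M))(P_j)}{e^{S^1}(N_{P_j})}.
\end{align*}
The task is then to compute each of these two families of terms.

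For the surface contributions, I would invoke Lemma \ref{Lem:ABBVsurface} directly: it already gives, for each fixed surface $\Sigma_i$ with normal weights $w_{\Sigma_i,1},\dots,w_{\Sigma_i,n-1}$,
\begin{align*}
\int_{\Sigma_i} \frac{(\mu^{S^1}\cdot c_{n-1}^{S^1}(M))\vert_{\Sigma_i}}{e^{S^1}(N_{\Sigma_i})} = \int_{\Sigma_i} \mu^{S^1} + 2(1-g_{\Sigma_i})\frac{\pi_{\Sigma_i}(\mu^{S^1}\vert_{\Sigma_i})}{x}\sum_{k=1}^{n-1}\frac{1}{w_{\Sigma_i,k}}.
\end{align*}
Since each $\Sigma_i$ is a $2$-sphere we have $g_{\Sigma_i}=0$, so the factor $2(1-g_{\Sigma_i})$ collapses to $2$, which yields exactly the first summand of the claimed formula. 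This step is essentially bookkeeping once Lemma \ref{Lem:ABBVsurface} is in hand.

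For the isolated point contributions I would use the displayed formula for $\int_F \frac{\mu^{S^1}\vert_F}{e^{S^1}(N_F)}$ at an isolated fixed point stated just before Lemma \ref{Lem:ABBVsurface}, applied to the class $\mu^{S^1}\cdot c_{n-1}^{S^1}(M)$. At such a point $P_j$ with tangent weights $w_{P_j,1},\dots,w_{P_j,n}$, the equivariant Euler class is $(w_{P_j,1}\cdots w_{P_j,n})x^n$, and I would evaluate $c_{n-1}^{S^1}(M)(P_j)$ as the degree-$(n-1)$ elementary symmetric polynomial in the $w_{P_j,k}x$, namely $e_{n-1}(w_{P_j,1},\dots,w_{P_j,n})x^{n-1} = \big(\sum_{k=1}^n \prod_{l\neq k} w_{P_j,l}\big)x^{n-1}$. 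Dividing by the Euler class produces the factor $\frac{1}{x}\big(\frac{1}{w_{P_j,1}}+\dots+\frac{1}{w_{P_j,n}}\big)$ times $\mu^{S^1}(P_j)$, which is precisely the second summand. The main point to handle carefully is the identification of $c_{n-1}^{S^1}(M)(P_j)$ with this symmetric polynomial and the cancellation that converts $e_{n-1}/e_n$ into the sum of reciprocals of the weights; this is the only genuinely computational step, and it is routine provided the weight conventions match those fixed in Theorem \ref{locNormForms}. Summing the two families of contributions gives the stated identity.
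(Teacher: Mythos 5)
Your proposal is correct and follows essentially the same route as the paper's own proof: apply ABBV localization to $\mu^{S^1}\cdot c_{n-1}^{S^1}(M)$, handle the fixed spheres via Lemma \ref{Lem:ABBVsurface} (with $g_{\Sigma_i}=0$), and compute the isolated-point contributions by identifying $c_{n-1}^{S^1}(M)\vert_{P_j}$ with the $(n-1)$-st elementary symmetric polynomial in the weights and dividing by the Euler class $\left(\prod_{s} w_{P_j,s}\right)x^n$. The paper's proof is exactly this computation, so there is nothing to add.
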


\begin{proof}
	For an isolated fixed point \(P_j\), we have \(e^{S^1}(N_{P_j})=  
	\left( \prod_{s=1}^{n} w_{P_j,s}\right) x^n \) and
	
	\begin{align*}
	\mu^{S^1}\cdot c_{n-1}^{S^1}(M) \vert_{P_j}= \mu^{S^1}(P_j)\left(  
	\sum_{h=1}^{n} \left( \prod_{k=1, k\neq h}^n w_{P_j,k}\right) \right)  
	x^{n-1}.
	\end{align*}
	Hence,
	
	\begin{align*}
	\frac{\mu^{S^1}\cdot c_{n-1}^{S^1}(M) \vert_{P_j}}{e^{S^1}(N_{P_j})} =  
	\dfrac{\mu^{S^1}(P_j)}{x} \left( \frac{1}{w_{P_j,1}}+\dots+  
	\frac{1}{w_{P_j,n}}\right).
	\end{align*}
	So the claim follows from the ABBV formula and Lemma \ref{Lem:ABBVsurface}.
	
\end{proof}

\end{section}

\begin{section}{The moment map polytope}\label{Sec:MMP}
	The aim of this section is to introduce conventions for the fixed point data of a Hamiltonian \(T\)-space and to define the pre-toric one-skeleton.
	Therefore, we recall facts about the moment map polytope (i.e. the image of the moment map) for a Hamiltonian \(T\)-space. 
	One of most the important theorems for Hamiltonian \(T\)-spaces is the famous Convexity-Theorem proved by Atiyah \cite{atiyah} and Guillemin-Sternberg \cite{GSconvex}.

\begin{theorem}(Convexity-Theorem)\label{Thm:Convexity}
Let \(\tham\) be a Hamiltonian \(T\)-space, then the fibers of \(\phi\) are connected and the image of the moment map \(\phi(M)\) is a convex polytope. In particular, \(\phi(M)\) is the convex hull of the images of the fixed submanifolds.
\end{theorem}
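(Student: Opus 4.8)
The plan is to follow the classical strategy of Atiyah \cite{atiyah} and Guillemin--Sternberg \cite{GSconvex}, reducing everything to a Morse-theoretic analysis of the individual components of \(\phi\). First I would fix \(\xi \in \text{Lie}(T)\) and study the single function \(\phi^\xi = \langle \phi, \xi\rangle\). Using the local normal form near fixed points (Theorem \ref{locNormForms}), the critical set of \(\phi^\xi\) is exactly the fixed-point set \(M^{T_\xi}\) of the subtorus \(T_\xi = \overline{\exp(\R\xi)}\): along a critical submanifold \(F\subset M^{T_\xi}\) the normal bundle splits into the complex weight lines indexed by those weights \(\alpha_{F,j}\) with \(\alpha_{F,j}(\xi)\neq 0\), and on each such \(\C\)-line the moment-map formula \(\phi(z)=\tfrac12\sum_j \alpha_{F,j}\vert z_j\vert^2+\phi(F)\) shows the Hessian of \(\phi^\xi\) to be \(\alpha_{F,j}(\xi)\,\vert z_j\vert^2\). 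Hence \(\phi^\xi\) is Morse--Bott and the index and coindex of every critical submanifold are even, being twice the number of weights with \(\alpha_{F,j}(\xi)<0\) (respectively \(>0\)).

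This even-index property is the engine of the argument. I would then invoke the standard Morse-theoretic lemma that a Morse--Bott function on a compact connected manifold whose critical submanifolds all have index and coindex different from \(1\) has connected (or empty) level sets and connected sub- and super-level sets. Applied to \(\phi^\xi\), this yields that each \(\phi^\xi\) attains its maximum and minimum on connected critical submanifolds and has image a closed interval; these extremal submanifolds are components of \(M^{T_\xi}\), and for rational \(\xi\) they are swept out by, and their \(\phi\)-images lie among, the images of the fixed submanifolds.

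Next I would establish connectedness of the fibers of the full moment map by induction on \(d=\dim T\), the base case \(d=1\) being the circle statement just proved. Writing \(T=T'\times S^1\) with \(\phi=(\mu,f)\), the inductive hypothesis gives that the fibers \(\mu^{-1}(a)\) are connected, and the goal is to see that \(f\) restricted to \(\mu^{-1}(a)\) still has connected level sets, so that \(\phi^{-1}(a,b)=\mu^{-1}(a)\cap f^{-1}(b)\) is connected. I expect this to be the main obstacle: \(\mu^{-1}(a)\) is in general not a smooth manifold, so the Morse theory cannot be applied naively. One must either perturb \(a\) to a regular value and run a limiting argument, or use the local structure of \(\phi\) near the singular strata (again via the normal form) to check directly that crossing a critical value of \(f\vert_{\mu^{-1}(a)}\) cannot disconnect a level set, the even-index input guaranteeing the absence of the topology changes that would create new components.

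Finally, for convexity and the vertex description I would argue that \(\phi(M)\) is locally convex: near the image of any fixed point the local normal form presents \(\phi\) as \(z\mapsto \tfrac12\sum_j \alpha_{\cdot,j}\vert z_j\vert^2+\mathrm{const}\), whose local image is the convex cone generated by the weights \(\alpha_{\cdot,j}\), and the connectedness of fibers lets one patch these local cones (and their analogues along higher-dimensional orbits) into a coherent local picture at every point of \(\phi(M)\). Since a closed, connected, locally convex subset of a real vector space is convex (Tietze--Nakajima), \(\phi(M)\) is convex; it is a polytope because it is the intersection of the finitely many supporting half-spaces \(\{\langle\,\cdot\,,\xi\rangle\le \max_M \phi^\xi\}\) for rational \(\xi\), each touching \(\phi(M)\) precisely along the \(\phi\)-image of a critical submanifold of \(\phi^\xi\). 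As every extreme point arises as the maximum of some \(\phi^\xi\), and such maxima are attained on components of \(M^{T_\xi}\) containing fixed submanifolds, the extreme points lie in the union of the images of the fixed submanifolds, so \(\phi(M)\) is their convex hull.
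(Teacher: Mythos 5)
This theorem is background in the paper: it is stated with attributions to Atiyah \cite{atiyah} and Guillemin--Sternberg \cite{GSconvex} and no proof is given there, so your attempt can only be measured against the classical proofs being cited. In outline you do reproduce them faithfully: the identification of the critical set of \(\phi^{\xi}\) with \(M^{T_\xi}\), the even index/coindex computation from the local normal form (Theorem \ref{locNormForms}), the lemma that a Morse--Bott function with no critical submanifold of index or coindex \(1\) has connected level sets, induction on \(\dim T\), and the support-functional argument locating extreme points of \(\phi(M)\) on images of fixed submanifolds. These are exactly the ingredients of the cited proofs.

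However, at the two places where the theorem is actually hard you name the difficulty instead of resolving it, and these are genuine gaps. First, the inductive step: you correctly observe that \(\mu^{-1}(a)\) is in general singular, but ``either perturb \ldots or use the local structure'' is not an argument. The classical resolution is concrete: for \(a'\) a \emph{regular} value of \(\mu=(\phi_1,\dots,\phi_{n-1})\) one proves, again via the normal form along fixed sets of subtori, that \(\phi_n\) restricted to the smooth connected manifold \(\mu^{-1}(a')\) is itself Morse--Bott with critical submanifolds of even index and coindex, so the \(n=1\) lemma applies; singular values are then handled not by a naive limit (connectedness does not pass to limits of fibers) but by writing \(\phi^{-1}(a)=\bigcap_{\varepsilon>0}\phi^{-1}\bigl(\bar{B}_\varepsilon(a)\bigr)\) as a nested intersection of compact connected sets. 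Without one of these two steps spelled out, the heart of the theorem is missing. Second, your route to convexity via local convexity and Tietze--Nakajima requires local models for \(\phi\) near \emph{arbitrary} points, i.e.\ the Marle--Guillemin--Sternberg normal form along orbits, which is substantially more machinery than Theorem \ref{locNormForms} provides; ``patching the local cones'' is precisely where that route becomes technical. Note that Atiyah's own argument bypasses this entirely: once fibers of all \((n-1)\)-fold linear combinations of the components are connected, then for any affine line \(L\subset(\operatorname{Lie}(T))^{*}\), writing \(L=A^{-1}(b)\) with \(A\) linear surjective onto \(\R^{n-1}\), one has \(\phi(M)\cap L=\phi\bigl((A\phi)^{-1}(b)\bigr)\), which is connected; a set meeting every line in a connected set is convex. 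Replacing your last paragraph by this observation would make your proof need only the connectedness statement you already set up, and the convex-hull/polytope claim then follows from your (correct) remark that each \(\phi^{\xi}\) attains its maximum on a component of \(M^{T_\xi}\), which is a compact symplectic manifold with Hamiltonian \(T\)-action and therefore contains fixed submanifolds.
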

Another important property of the moment map is stated in the following theorem.

\begin{theorem}
	(Sjamaar \cite{Sjamaar})\label{Thm:MMisopen}
Let \(\tham\) be a Hamiltonian \(T\)-space. As a map to \(\phi(M)\) the moment map is open.
\end{theorem}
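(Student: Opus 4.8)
The statement is local in nature, so the plan is to reduce the openness of $\phi$ onto $\phi(M)$ to a model computation via the local normal form, and then to globalise it using the connectedness of the fibres (Theorem~\ref{Thm:Convexity}). Since a continuous map is open onto its image exactly when it sends some neighbourhood basis of each point to relative neighbourhoods of the image point, it suffices to show: for every $p\in M$ and every sufficiently small $T$-invariant neighbourhood $U$ of $p$, the set $\phi(U)$ is a neighbourhood of $\phi(p)$ in $\phi(M)$.

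First I would set up the local model. Let $H=T_p$ be the stabiliser of $p$, a subtorus of $T$, and let $V$ be the symplectic slice at $p$, a complex $H$-representation with weights $\beta_1,\dots,\beta_m\in(\text{Lie}(H))^*$. By the local normal form for the moment map near the orbit $T\cdot p$ (the symplectic slice theorem, which generalises Theorem~\ref{locNormForms} from fixed points to arbitrary orbits), a $T$-invariant neighbourhood of $p$ is equivariantly modelled so that, after choosing a splitting $(\text{Lie}(T))^*=(\text{Lie}(H))^*\oplus(\text{Lie}(H))^\circ$, the moment map takes the form
\begin{align*}
\phi=\phi(p)+\eta+\tfrac12\sum_{j=1}^m\beta_j\,|v_j|^2,\qquad \eta\in(\text{Lie}(H))^\circ,\ v\in V .
\end{align*}
Thus the image of this neighbourhood is a neighbourhood of the vertex of the polyhedral cone
\begin{align*}
C_p=\phi(p)+\Big((\text{Lie}(H))^\circ+\operatorname{cone}(\beta_1,\dots,\beta_m)\Big).
\end{align*}
When $p$ is fixed we have $H=T$, $(\text{Lie}(H))^\circ=0$, and this recovers Theorem~\ref{locNormForms}; when $H$ is trivial, $\phi$ is a submersion near $p$ and openness at $p$ is immediate.

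The crux of the local step is then a single linear-algebraic lemma: the quadratic map $q\colon V\to(\text{Lie}(H))^*$, $q(v)=\tfrac12\sum_j\beta_j|v_j|^2$, is open onto $\operatorname{cone}(\beta_1,\dots,\beta_m)$. I would factor $q$ through $v\mapsto(|v_1|^2,\dots,|v_m|^2)\mapsto\sum_j\beta_j|v_j|^2$. The first map is open onto $\R^m_{\geq 0}$, being a product of the open maps $z\mapsto|z|^2$; the second is the restriction to the positive orthant of a linear surjection onto $\operatorname{span}(\beta_1,\dots,\beta_m)$, and its openness onto the cone is the real content. This follows from the fact that every element $c$ of a finitely generated cone admits a nonnegative representation $c=\sum_j r_j\beta_j$ with $\sum_j r_j\leq K\|c\|$ for a fixed constant $K$ (a bounded Carath\'eodory estimate), so that small balls about the vertex pull back to small boxes. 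Together with the free $T/H$-directions, which $\phi$ covers submersively, this shows that $\phi$ is open onto its \emph{local} image $C_p$.

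Finally I would globalise. Local openness onto the local images $C_p$, combined with the connectedness of the fibres $\phi^{-1}(a)$ (Theorem~\ref{Thm:Convexity}), upgrades to openness onto the full image $\phi(M)$ by a local-to-global argument of Condevaux--Dazord--Molino type: covering the compact connected fibre $\phi^{-1}(\phi(p))$ by finitely many model neighbourhoods and applying a tube-lemma argument, one finds a small ball $B$ with $\phi^{-1}(B)$ contained in their union, whence $\phi(M)\cap B$ is covered by the corresponding local cones; connectedness of the fibre forces these cones to assemble to the tangent cone of the polytope $\phi(M)$ at $\phi(p)$, so that $\phi(U)$ is indeed a relative neighbourhood. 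I expect the two genuine obstacles to be precisely these: the openness of the quadratic moment map onto its cone (the bounded-representation estimate), and the local-to-global identification of the local cone $C_p$ with the germ of $\phi(M)$ at $\phi(p)$, for which the connectedness of the fibres is indispensable.
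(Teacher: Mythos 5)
A preliminary remark: the paper does not prove this statement at all --- it is quoted verbatim from Sjamaar \cite{Sjamaar} --- so your proposal can only be measured against the standard arguments in the literature. Your local half is correct and essentially complete. The Marle--Guillemin--Sternberg slice theorem (the orbit version of Theorem \ref{locNormForms}) does put $\phi$ near $T\cdot p$ into the form $\phi(p)+\eta+\tfrac12\sum_j\beta_j|v_j|^2$, and your key lemma holds: $v\mapsto(|v_1|^2,\dots,|v_m|^2)$ is open onto $\R^m_{\geq 0}$, and the linear map $t\mapsto\sum_j t_j\beta_j$ restricted to the orthant is open onto $\operatorname{cone}(\beta_1,\dots,\beta_m)$, because by Carath\'eodory every $c$ in the cone lies in a simplicial subcone spanned by a linearly independent subset of the $\beta_j$, on which the coefficients are bounded linear functions of $c$; maximizing over the finitely many such subsets gives exactly your estimate $\sum_j r_j\leq K\|c\|$. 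This shows that $\phi(U_p)$ is a neighbourhood of $\phi(p)$ in the local cone $C_p$.

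The globalization, however, contains a genuine gap, and it sits exactly where you locate it. Openness at $p$ means: for a neighbourhood $U$ of the \emph{single point} $p$, the set $\phi(U)$ --- which your local analysis places inside the one cone $C_p$ --- must contain $\phi(M)\cap B$ for some ball $B$ about $\phi(p)$; equivalently, the germ of $\phi(M)$ at $\phi(p)$ must be contained in $C_p$ itself. Your tube-lemma argument only yields $\phi(M)\cap B\subseteq\bigcup_i C_{q_i}$, a union over a finite cover of the fibre, and the sentence ``connectedness of the fibre forces these cones to assemble to the tangent cone'' is an assertion, not a mechanism; worse, even taken at face value (the union of the $C_{q_i}$ equals the tangent cone) it is too weak, since $C_p$ could a priori be a \emph{proper} subcone of that union, in which case $\phi(U)$ would not be a relative neighbourhood and openness would fail at $p$. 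What is actually needed is that each individual cone $C_q$, for every $q$ in the fibre $\phi^{-1}(\phi(p))$, separately equals the tangent cone of $\phi(M)$ at $\phi(p)$. That statement is precisely the nontrivial content of the Condevaux--Dazord--Molino / Hilgert--Neeb--Plank Local--Global Principle you allude to (whose hypothesis, incidentally, is \emph{local} fibre connectedness, a further consequence of the normal form that you would need to verify; global connectedness of fibres as in Theorem \ref{Thm:Convexity} is an output of that machinery, not its input). Alternatively, one can close the gap with a separation argument: if the germ of $\phi(M)$ were not contained in $C_p$, choose $\xi$ with $\langle\xi,\cdot\,\rangle\leq 0$ on $C_p-\phi(p)$ but $\langle\xi,y-\phi(p)\rangle>0$ for some $y\in\phi(M)$; the first condition forces $\xi\in\operatorname{Lie}(H)$ and $\beta_j(\xi)\leq 0$ for all $j$, so $\phi^\xi=\langle\phi,\xi\rangle$ has a local maximum at $p$; since a component of a moment map has a unique local maximum component (Atiyah's connectedness of super-level sets --- this, rather than connectedness of the fibres of $\phi$, is the real connectedness input), $p$ is a global maximum of $\phi^\xi$, contradicting the existence of $y$. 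Until one of these routes is actually carried out, your proof is incomplete.
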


The moment map polytope contains informations about the corresponding Hamiltonian \(T\)-space. In particular, by the Convexity-Theorem we have that the preimage of a vertex of \(\phi(M)\) is a connected component of \(M^T\).
Moreover, a face of the moment  map polytope gives us a symplectic submanifold. Namely, let \(\mathcal{F}\) be a \(d\)-dimensional face of the polytope \(\phi(M)\), then \(\mathcal{F}\) is of the form 
\begin{align*}
\mathcal{F}=\left( \mathcal{H}_\mathcal{F} +\left\lbrace  p_\mathcal{F}\right\rbrace \right)  \bigcap \phi(M),
\end{align*}
where \(p_\mathcal{F}\) is a point in \(\mathcal{F}\) and \(\mathcal{H}_\mathcal{F}\) is a linear subspace of \((\text{Lie}(T))^*\) of dimension \(d\). We call \(\mathcal{H}_\mathcal{F}\) the defining subspace for \(\mathcal{F}\).
\begin{proposition}\label{Prop:faceofMMP}
Let \(\tham\) be a Hamiltonian \(T\)-space and let \(\mathcal{F}\) be a \(d\)-dimensional face of \(\phi(M)\) with defining  subspace \(\mathcal{H}_\mathcal{F}\). Then

\begin{align}
T_\mathcal{F}= \exp \left( \ker \left( \mathcal{H}_\mathcal{F}\right) \right) 
\end{align} 
is a subtorus of \(T\) of codimension \(d\) and the preimage \(\phi^{-1}(\mathcal{F})\) is a connected component of \(M^{T_\mathcal{F}}\). Moreover, \(\phi^{-1}(\mathcal{F})\) is a symplectic submanifold of \((M,\omega)\)  and the quotient torus \(T/T_\mathcal{F}\) acts effectively and in Hamiltonian fashion   on \(\phi^{-1}(\mathcal{F})\).

\end{proposition}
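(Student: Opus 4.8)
The plan is to establish Proposition \ref{Prop:faceofMMP} by reducing everything to the fixed-point structure of a well-chosen subtorus and then invoking Lemma \ref{Lem:Submfd} together with the Convexity-Theorem. First I would make precise the subtorus $T_\mathcal{F}=\exp(\ker(\mathcal{H}_\mathcal{F}))$: since $\mathcal{H}_\mathcal{F}\subset(\text{Lie}(T))^*$ is a $d$-dimensional linear subspace, its annihilator $\ker(\mathcal{H}_\mathcal{F})\subset\text{Lie}(T)$ has codimension $d$, and I would check that it is a \emph{rational} subspace (i.e.\ that $\ker(\mathcal{H}_\mathcal{F})\cap\ell_T$ spans $\ker(\mathcal{H}_\mathcal{F})$), which is exactly what guarantees that $T_\mathcal{F}$ is a closed codimension-$d$ subtorus rather than a dense winding line. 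The rationality follows because $\mathcal{F}$ is a genuine face of the polytope $\phi(M)$, whose supporting hyperplanes are cut out by the isotropic weights at the fixed submanifolds (which lie in the dual lattice $\ell_T^*$ by Theorem \ref{locNormForms}); hence $\mathcal{H}_\mathcal{F}$ can be spanned by lattice vectors and its annihilator is lattice-rational.

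Next I would identify $\phi^{-1}(\mathcal{F})$ with a fixed-point component of $T_\mathcal{F}$. The key computation is local: pick $p\in\phi^{-1}(\mathcal{F})$ and use the local normal form of Theorem \ref{locNormForms}, writing $\phi(z)=\tfrac{1}{2}\sum_j\alpha_{p,j}|z_j|^2+\phi(p)$. A coordinate $z_j$ is free to move within $\phi^{-1}(\mathcal{F})$ precisely when $\alpha_{p,j}$ lies in the direction space $\mathcal{H}_\mathcal{F}$ of the face, and such a direction is exactly one on which $T_\mathcal{F}$ acts trivially, because $T_\mathcal{F}=\exp(\ker\mathcal{H}_\mathcal{F})$ kills a weight $\alpha$ iff $\alpha\in\mathcal{H}_\mathcal{F}$. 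This shows that near $p$ the set $\phi^{-1}(\mathcal{F})$ coincides with the $T_\mathcal{F}$-fixed locus $M^{T_\mathcal{F}}$ as submanifolds, so $\phi^{-1}(\mathcal{F})$ is locally a connected component of $M^{T_\mathcal{F}}$. Connectedness of the whole preimage is handed to us by the fiber-connectedness half of the Convexity-Theorem (Theorem \ref{Thm:Convexity}), applied to the $T_\mathcal{F}$-moment map or directly to $\phi$ over the relatively open face.

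With $\phi^{-1}(\mathcal{F})$ recognized as a connected component of $M^{T_\mathcal{F}}$, Lemma \ref{Lem:Submfd} immediately gives that it is a closed symplectic submanifold of $(M,\omega)$, and the remark following that lemma gives that $T/T_\mathcal{F}$ acts on it in Hamiltonian fashion. It remains to upgrade this action to an \emph{effective} one. For this I would argue that the isotropic weights at $p$ lying in $\mathcal{H}_\mathcal{F}$, when pushed to $(\text{Lie}(T/T_\mathcal{F}))^*\cong\mathcal{H}_\mathcal{F}$, must $\Z$-span the dual lattice of $T/T_\mathcal{F}$: the ambient $T$-action is effective, so the full weight set spans $\ell_T^*$ (as in the proof of Lemma \ref{Lem: ComplexityDim}), and the weights vanishing on $\ker\mathcal{H}_\mathcal{F}$ are precisely those descending to $T/T_\mathcal{F}$, forcing them to span the quotient lattice.

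The main obstacle I expect is the rationality/latticing step and the precise matching of the tangent directions of $\mathcal{F}$ with the trivial-weight subspace: one must be careful that $\mathcal{H}_\mathcal{F}$ (a face direction read off from the polytope) really equals the span of the face-tangent isotropic weights at every $p\in\phi^{-1}(\mathcal{F})$, not merely a subspace of it. The openness of the moment map (Theorem \ref{Thm:MMisopen}) is the right tool to rule out the degenerate possibility that $\phi^{-1}(\mathcal{F})$ is strictly smaller than the full $T_\mathcal{F}$-fixed component, since openness forces $\phi$ to be a submersion onto $\mathcal{F}$ in the face directions and hence pins down the dimension of $\phi^{-1}(\mathcal{F})$ to match that of the fixed component. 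Everything else is bookkeeping with the local normal form.
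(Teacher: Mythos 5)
The paper offers no proof of Proposition \ref{Prop:faceofMMP} to compare against: it simply cites \cite[Chapter 3]{Nico}. Judged on its own, your architecture (rationality of $\ker(\mathcal{H}_\mathcal{F})$ from lattice edge directions, a local identification of $\phi^{-1}(\mathcal{F})$ with a fixed-point component, then Lemma \ref{Lem:Submfd} for symplecticness) is the standard route, and the rationality and connectedness steps are essentially sound. But the identification step misapplies Theorem \ref{locNormForms}: that theorem is stated only at points of $M^T$, whereas you invoke it at an arbitrary $p\in\phi^{-1}(\mathcal{F})$; for $d\geq 1$ most points of $\phi^{-1}(\mathcal{F})$ are not $T$-fixed, and at such points there are no ``isotropic weights $\alpha_{p,j}$'' in the sense of that theorem. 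Carrying out your local comparison there would require the Marle--Guillemin--Sternberg normal form along nontrivial orbits, which the paper never supplies. The standard repair avoids this entirely: pick $\xi\in\ker(\mathcal{H}_\mathcal{F})$ generic in the relative interior of the normal cone of $\mathcal{F}$, so that the closure of $\exp(\R\xi)$ is $T_\mathcal{F}$ and $\mathcal{F}$ is exactly the maximizing set of $\langle\cdot\,,\xi\rangle$ on $\phi(M)$; then $\phi^{-1}(\mathcal{F})$ is the maximum locus of the Morse--Bott function $\phi^{\xi}=\langle\phi,\xi\rangle$, hence a connected component of its critical set, which equals $M^{T_\mathcal{F}}$.

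The effectiveness step is a genuine non sequitur, and the inference is in fact false. That the full weight set at $p$ $\Z$-spans $\ell_T^*$ does not imply that the sub-collection of weights lying in $\mathcal{H}_\mathcal{F}$ $\Z$-spans the lattice $\ell_T^*\cap\mathcal{H}_\mathcal{F}$ (the weight lattice of $T/T_\mathcal{F}$): a subset of a spanning set can generate only a finite-index sublattice of the subspace it spans. Concretely, let $T^2$ act on $\C P^3$ by $(t_1,t_2)\cdot[y_0:y_1:y_2:y_3]=[y_0:t_1^2y_1:t_2y_2:t_1t_2y_3]$; this is effective and Hamiltonian, the weights at $[1:0:0:0]$ are $(2,0),(0,1),(1,1)$, which do span $\Z^2$, and the moment polytope is the quadrilateral with vertices $(0,0),(2,0),(1,1),(0,1)$. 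For the edge $e$ from $(0,0)$ to $(2,0)$ one has $\phi^{-1}(e)=\{y_2=y_3=0\}\cong\C P^1$ and $T_e=\{1\}\times S^1$, but the quotient circle acts on $\phi^{-1}(e)$ by $[y_0:y_1]\mapsto[y_0:t_1^2y_1]$, with kernel $\Z/2$: not effectively. So your argument cannot be patched as written; what it actually yields is only that the kernel of the $T/T_\mathcal{F}$-action on $\phi^{-1}(\mathcal{F})$ is finite, with effectiveness holding precisely when the weights along the face generate all of $\ell_T^*\cap\mathcal{H}_\mathcal{F}$. (This example also shows the effectiveness assertion of the proposition itself is delicate as stated, which is worth flagging since the paper later uses it, e.g.\ for the $\mathcal{C}$-action on $F_e$ in Section \ref{Sec:dim4}.)
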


A prove of Proposition \ref{Prop:faceofMMP} can be found in \cite[Chapter 3]{Nico}.
The next corollary gives an upper bound for the dimension of \(\phi^{-1}(\mathcal{F})\).

\begin{corollary}\label{Cor:DimFace}
Let \(\tham\) be  a complexity-\(k\) space and let \(\mathcal{F}\) be a \(d\)-dimensional face of \(\phi(M)\) . Then \(\phi^{-1}(\mathcal{F})\) is a symplectic submanifold of dimension at most \(2(k+d)\).
\end{corollary}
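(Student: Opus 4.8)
The plan is to reduce the whole statement to a weight count at a single $T$-fixed point of $\phi^{-1}(\mathcal{F})$, in the same spirit as the proof of Lemma \ref{Lem: ComplexityDim}, but now keeping track of the defining subspace $\mathcal{H}_\mathcal{F}$. First I would invoke Proposition \ref{Prop:faceofMMP} to record the structure of $\phi^{-1}(\mathcal{F})$: it is a connected component of $M^{T_\mathcal{F}}$ and a symplectic submanifold, where $T_\mathcal{F}=\exp(\ker \mathcal{H}_\mathcal{F})$ is a subtorus of codimension $d$, so that $\dim T_\mathcal{F}=(n-k)-d$ (using $\dim M=2n$ and $\dim T=n-k$). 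In particular symplecticity of $\phi^{-1}(\mathcal{F})$ is handed to us for free, and only the dimension bound remains. To have a fixed point to work with, I would note that the face $\mathcal{F}$ has a vertex, which is a vertex of $\phi(M)$; by the Convexity-Theorem \ref{Thm:Convexity} its preimage is a component of $M^T$, so $M^T\cap\phi^{-1}(\mathcal{F})\neq\varnothing$.

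Fix such a point $p\in M^T\cap\phi^{-1}(\mathcal{F})$ and apply the local normal form (Theorem \ref{locNormForms}) to decompose $T_pM=\bigoplus_{i=1}^{n}V_{\alpha_{p,i}}$ into weight spaces with isotropic weights $\alpha_{p,1},\dots,\alpha_{p,n}\in\ell_T^*$. Since $\phi^{-1}(\mathcal{F})$ is the component of $M^{T_\mathcal{F}}$ through $p$, its tangent space at $p$ is the $T_\mathcal{F}$-fixed subspace of $T_pM$, i.e.\ the sum of those $V_{\alpha_{p,i}}$ for which $\alpha_{p,i}$ vanishes on $\text{Lie}(T_\mathcal{F})=\ker\mathcal{H}_\mathcal{F}$, equivalently for which $\alpha_{p,i}\in\mathcal{H}_\mathcal{F}$. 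Writing $m$ for the number of indices $i$ with $\alpha_{p,i}\in\mathcal{H}_\mathcal{F}$, this gives $\dim\phi^{-1}(\mathcal{F})=2m$, so the entire claim reduces to proving the inequality $m\leq k+d$.

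The key step, and the only place effectiveness is used, is the following rank count. Because the $T$-action on $M$ is effective, the isotropic weights $\alpha_{p,1},\dots,\alpha_{p,n}$ have $\Z$-span equal to $\ell_T^*$; this is precisely the input exploited in the proof of Lemma \ref{Lem: ComplexityDim}. I would then pass to the subtorus $T_\mathcal{F}$: the restriction map $\ell_T^*\rightarrow\ell_{T_\mathcal{F}}^*$ induced by $T_\mathcal{F}\hookrightarrow T$ is surjective, and by construction every weight lying in $\mathcal{H}_\mathcal{F}=\text{Ann}(\text{Lie}(T_\mathcal{F}))$ is sent to $0$. Hence the images of the remaining $n-m$ weights must already $\Z$-span $\ell_{T_\mathcal{F}}^*\cong\Z^{(n-k)-d}$, forcing $n-m\geq (n-k)-d$, that is $m\leq k+d$. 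Together with $\dim\phi^{-1}(\mathcal{F})=2m$ this yields $\dim\phi^{-1}(\mathcal{F})\leq 2(k+d)$, as desired.

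I expect the genuinely delicate point to be the annihilator bookkeeping in the last paragraph: one must check that $\mathcal{H}_\mathcal{F}$ is exactly the annihilator of $\text{Lie}(T_\mathcal{F})$, so that precisely the weights in $\mathcal{H}_\mathcal{F}$ restrict trivially to $T_\mathcal{F}$, and that restriction of characters to a subtorus is surjective on lattices. Granting these two facts, the rank inequality is immediate and the rest is a direct application of Proposition \ref{Prop:faceofMMP} and the local normal form.
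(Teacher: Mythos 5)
Your argument is correct, but it takes a genuinely different route from the paper's, which is a two-line reduction rather than a direct weight count. The paper's proof chooses no \(T\)-fixed point and does no lattice bookkeeping at the level of the corollary: it observes that \(T_\mathcal{F}=\exp(\ker(\mathcal{H}_\mathcal{F}))\) acts on \((M,\omega)\) in Hamiltonian fashion (and effectively, since a subgroup of an effectively acting group acts effectively) with complexity \(\tfrac{1}{2}\dim(M)-\dim(T_\mathcal{F})=k+d\), recalls from Proposition \ref{Prop:faceofMMP} that \(\phi^{-1}(\mathcal{F})\) is a connected component of \(M^{T_\mathcal{F}}\), and then quotes Lemma \ref{Lem: ComplexityDim} for this complexity-\((k+d)\) space; all weight-theoretic content is thereby delegated to the already-proved lemma, applied at a \(T_\mathcal{F}\)-fixed point (any point of \(\phi^{-1}(\mathcal{F})\)) with the \(T_\mathcal{F}\)-weights. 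You instead re-derive the lemma's content in the relative setting: you must first secure a \(T\)-fixed point \(p\in\phi^{-1}(\mathcal{F})\) (via a vertex of \(\mathcal{F}\) and Theorem \ref{Thm:Convexity}), decompose \(T_pM\) by Theorem \ref{locNormForms}, and convert effectiveness of the full \(T\)-action into \(m\le k+d\) through the surjectivity of \(\ell_T^*\rightarrow\ell_{T_\mathcal{F}}^*\) and the identification \(\mathcal{H}_\mathcal{F}=\operatorname{Ann}(\operatorname{Lie}(T_\mathcal{F}))\). The two delicate points you flag are real but standard (double annihilator for the rational subspace \(\ker(\mathcal{H}_\mathcal{F})\), and the splitting of \(1\to T_\mathcal{F}\to T\to T/T_\mathcal{F}\to 1\) for tori), and they are precisely the overhead the paper's reduction avoids, since effectiveness of the restricted \(T_\mathcal{F}\)-action gives the needed span in \(\ell_{T_\mathcal{F}}^*\) directly. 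What your version buys in exchange is transparency and a slightly finer conclusion: it exhibits \(\dim\phi^{-1}(\mathcal{F})\) as exactly twice the number of isotropic weights at \(p\) lying in \(\mathcal{H}_\mathcal{F}\), whereas the paper's argument only delivers the upper bound.
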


\begin{proof}
Let \(\mathcal{H}_\mathcal{F}\) be the defining subspace of \(\mathcal{F}\). By Proposition \ref{Prop:faceofMMP}, we have that \(T_\mathcal{F}= \exp \left( \ker \left( \mathcal{H}_\mathcal{F}\right) \right)\) is a subtorus of \(T\) of codimension \(d\). Moreover, \(T_\mathcal{F}\) acts on \((M,\omega)\)  in Hamiltonian  fashion with complexity

\begin{align}
k_\mathcal{F}&=\dfrac{1}{2} \dim (M)-\dim(T_\mathcal{F})\\
&=\dfrac{1}{2} \dim (M)-\left( \dim(T)-d\right) \\&=k+d.
\end{align}
Moreover, by Proposition \ref{Prop:faceofMMP} we also have that \(\phi^{-1}(\mathcal{F})\) is a connected component of \(M^{T_\mathcal{F}}\). 
And so Lemma \ref{Lem: ComplexityDim} implies that 
\begin{align}
\dim (\phi^{-1}(\mathcal{F})) \leq 2k_\mathcal{F}=2(k+d).
\end{align}

\end{proof}

\begin{subsection}{Conventions for the fixed point data }

Let \(\tham\) be a Hamiltonian \(T\)-space of dimension \(2n\) and \(p \in M^T\) an isolated fixed point with isotropic  weights \(\alpha_{p,1} ,\dots, \alpha_{p,n} \in \ell_T^* \setminus\left\lbrace 0\right\rbrace  \). For \(i=1,\dots, n \quad \)  \(T_{\alpha_{p,i}}=\exp(\ker (a_{p,i}))\) is a codimension-\(1\) subtorus of \(T\). We denote by \(S_{\alpha_{p,i}}\) the connected component of \(M^{T_{\alpha_{p,i}}}\) that contains \(p\). So \(S_{\alpha_{p,i}}\) is a symplectic and \(T\)-invariant submanifold.

\begin{definition}\label{Def:lightweight}
We call \(S_{\alpha_{p,i}}\)  the submanifold that belongs to the weight \(\alpha_{p,i}\). We say the weight \(\alpha_{p,i}\) is \textbf{light} if the dimension of \(S_{\alpha_{p,i}}\) is \(2\) (or equivalent \(\alpha_{p,i}\) and \(\alpha_{p,j}\) are linearly independent for all \(j\in \left\lbrace 1,\dots, \hat{i},\dots ,n\right\rbrace \) ). Otherwise the weight \(\alpha_{p,i}\) is called \textbf{heavy}.
\end{definition}
Moreover, we classify the isolated fixed points of a Hamiltonian \(T\)-space into two categories as follows.

\begin{definition}\label{Def:GKMpoint}
Let \(\tham\) be a Hamiltonian \(T\)-space. We denote the set of isolated fixed points by \(M_0^T\).  An isolated fixed point \(p\in M_0^T\) is called \textbf{GKM} if the isotropic weights at \(p\) are pairwise linearly independent (i.e. all isotropic weights are light), otherwise \(p\) is called \text{non-GKM}.  We denote by \textbf{\(M_{GKM}^T\)} the set of all GKM-points.
\end{definition}

\begin{remark}\label{Rem:forGKMpoints}
A Hamiltonian \(T\)-space is called GKM if for each codimension-\(1\) subtorus \(H\) of \(T\) any connected component of \(M^H\) has at most dimension \(2\) -or equivalent the fixed point set \(M^T\) contains just isolated fixed points, which are all GKM-. Hamiltonian GKM-spaces are a subcategory of GKM spaces, which were introduced by Goresky-Kottwitz-MacPherson in \cite{GKM}.
\end{remark}

\begin{lemma}\label{Lem:locnonGKM}
Let \(\tham\) be a complexity-one space of dimension \(2n\) and let \(p\in M^T\) be an isolated fixed  point which is not GKM. Then exactly two weights of \(p\) are heavy.  Moreover, we have
\begin{align*}
& \phi(p) \text{ is a vertex of } \phi(M) \text{ and } \phi^{-1}(\phi(p))=p\quad \quad \quad \text{or} \\
&\phi(p) \text { is not a vertex of }\phi(M), \text{ but } \phi(p) \text{ lies on an edge of } \phi(M).
\end{align*}
\end{lemma}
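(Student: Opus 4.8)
The plan is to treat the two assertions separately. For the first, I would argue by linear algebra using effectiveness of the $T$-action. Since $p$ is an isolated fixed point, none of its isotropic weights $\alpha_{p,1},\dots,\alpha_{p,n}\in\ell_T^*$ vanishes, and since the action is effective their $\Z$-span is all of $\ell_T^*$, which has rank $n-1$ because $\tham$ has complexity one. I would partition the weights into maximal classes of mutually parallel vectors. Each class spans a single line, so if there are $m$ classes the weights span a subspace of dimension at most $m$; comparing with $\mathrm{rank}(\ell_T^*)=n-1$ gives $m\ge n-1$. On the other hand $m\le n$, with equality precisely when all classes are singletons, i.e.\ when every weight is light and $p$ is GKM. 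As $p$ is not GKM we must have $m=n-1$, and since the $n$ weights fall into $n-1$ classes, exactly one class has two elements and all others are singletons. The two weights in the doubleton class are exactly the heavy ones (each is parallel to another weight), while every remaining weight is light; this gives that exactly two weights are heavy.

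For the second assertion I would read off the local shape of $\phi(M)$ at $\phi(p)$ from the local normal form. By Theorem \ref{locNormForms} there is a neighbourhood $U_p$ on which $\phi(z)=\phi(p)+\tfrac12\sum_j\alpha_{p,j}|z_j|^2$, so $\phi(U_p)$ is a neighbourhood of the apex of the cone $\phi(p)+C$, where $C=\{\sum_j t_j\alpha_{p,j}:t_j\ge 0\}$ is the cone generated by the weights. Since by Theorem \ref{Thm:MMisopen} the moment map is open onto $\phi(M)$, the set $\phi(U_p)$ is a relative neighbourhood of $\phi(p)$ in $\phi(M)$; hence $\phi(M)$ coincides with $\phi(p)+C$ near $\phi(p)$, and the tangent cone (cone of feasible directions) of $\phi(M)$ at $\phi(p)$ equals $C$. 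Consequently the dimension of the minimal face of $\phi(M)$ whose relative interior contains $\phi(p)$ equals the dimension of the lineality space of $C$, i.e.\ of the largest linear subspace contained in $C$. Write $\alpha'=\lambda\alpha$ for the two heavy, parallel weights, with $\lambda\neq 0$.

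I would then split according to the sign of $\lambda$. If $\lambda>0$, the weights span only $n-1$ distinct rays (the $n-2$ light directions together with the single heavy direction); choosing one representative per parallel class, these $n-1$ vectors still span $\R^{n-1}$ and so are linearly independent, whence $C$ is simplicial and pointed. Its lineality space is $\{0\}$, so $\phi(p)$ is a vertex of $\phi(M)$; by the Convexity-Theorem the preimage of a vertex is a connected component of $M^T$, which is $\{p\}$ since $p$ is isolated, giving $\phi^{-1}(\phi(p))=\{p\}$. If instead $\lambda<0$, then $t\alpha+s\alpha'$ with $t,s\ge 0$ sweeps out the whole line $L=\R\alpha$, so $C\supseteq L$. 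I would check that the lineality space is exactly $L$: projecting modulo $L$ the heavy weights vanish, while the $n-2$ light weights map to $n-2$ vectors spanning $\R^{n-1}/L\cong\R^{n-2}$, hence linearly independent, generating a pointed cone; this forces the lineality of $C$ to be one-dimensional. Therefore $\phi(p)$ lies in the relative interior of a $1$-dimensional face, i.e.\ on an edge of $\phi(M)$, and is not a vertex. The two cases are exhaustive and yield the two alternatives of the statement.

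The step I expect to be the main obstacle is the convex-geometric identification in the second paragraph: showing cleanly that the local moment image agrees with the weight cone $\phi(p)+C$ (for which the openness of $\phi$ from Theorem \ref{Thm:MMisopen} is essential, as it upgrades the inclusion $\phi(U_p)\subseteq\phi(M)$ to a relative neighbourhood), and that the dimension of the face through $\phi(p)$ is exactly the dimension of the lineality space of $C$. Once the correct tangent cone is in hand, the linear-independence bookkeeping for the light weights modulo $L$ is routine.
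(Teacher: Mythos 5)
Your proof is correct and follows essentially the same route as the paper: effectiveness plus complexity one forces exactly one parallel pair among the $n$ weights (the two heavy ones, with ratio $\lambda$), and the vertex/edge dichotomy is decided by the sign of $\lambda$ via the local normal form, the openness of the moment map and the Convexity-Theorem. The paper's own proof is much terser --- for the second part it simply cites Theorems \ref{Thm:Convexity}, \ref{Thm:MMisopen} and Proposition \ref{Prop:faceofMMP} --- so your tangent-cone argument supplies exactly the convex-geometric details it leaves implicit.
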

\begin{proof}
Since the \(T\)-action is effective, we have that \(\Z-\text{span}\left\lbrace \alpha_{p,1},\dots,\alpha_{p,n}\right\rbrace =l^*(\cong \Z^{n-1})\).
So w.l.o.g we assume that the weights are ordered by  the following convention: 
\begin{align}
&\bullet \R - \text{span}\left\lbrace \alpha_{p,1} ,\dots, \alpha_{p,n-1}\right\rbrace = (\text{Lie}(T))^*\\
&\bullet \alpha_{p,n} = \lambda \cdot \alpha_{p,n-1} \text{ for some } \lambda \in \Q\setminus \left\lbrace 0\right\rbrace 
\end{align}
Hence, the weights \(\alpha_{p,1},\dots,\alpha_{p,n-2}\) are light  and the weights \(\alpha_{p,n-1},\alpha_{p,n}\) are heavy. Moreover, Theorems 4.1, 4.2 and 4.3 imply that
\begin{align*}
&\bullet \text{If } \lambda>0, \text{ then } \phi(p) \text{ is a vertex of } \phi(M) \text{ and } \phi^{-1}(\phi(p))=p.\\
&\bullet \text{If } \lambda<0, \text{ then } \phi(p) \text { is not a vertex of }\phi(M), \text{ but } \phi(p) \text{ lies on an edge of } \phi(M).
\end{align*}
\end{proof}

\begin{definition}\label{Def:fatedge}
Let \(\tham\) be a complexity-one space. An edge \(e\) of the moment map polytope is called \textbf{fat} if \(\phi^{-1}(e)\) has dimension \(4\).   We denote by \(E_{\Phi(M)}^{fat}\) the set of fat edges of the moment map polytope \(\Phi(M)\).
\end{definition}

\begin{lemma}\label{Lem:meetingfatedge}
Let \(\tham\) be a complexity-one space of dimension \(2n\).\\
\underline{\textbf{\((i)\)}} If \(p\in M^T\) is a GKM isolated fixed point, then \(\phi(p)\) does not meet a fat edge of \(\phi(M)\).\\
\underline{\textbf{\((ii)\)}} If \(p\in M^T\) is a non-GKM isolated fixed point, then \(\phi(p)\) meets precisely one fat edge of \(\phi(M)\). In particular, let \(a_{p,n-1}, \alpha_{p,n}\) be the heavy weights of \(p\), then this fat edge is given by
\begin{align*}
\left( \phi(p) + \R\left\langle \alpha_{p,n-1}\right\rangle \right) \cap \phi(M) = \left( \phi(p) + \R\left\langle \alpha_{p,n}\right\rangle \right) \cap \phi(M) 
\end{align*}
\underline{\textbf{\((iii)\)}} If \(\Sigma\subset M^T\) is a fixed surface. Then all \(n-1\) edges of \(\phi(M)\) that are meeting at the vertex \(\phi(\Sigma)\) are fat. Moreover, these edges are given by 
\begin{align*}
\left( \phi(\Sigma) + \R\left\langle \alpha_{\Sigma,j}\right\rangle \right) \cap \phi(M) 
\end{align*}
for \(i=1,\dots, n-1\), where \(\alpha_{\Sigma,1},\dots, \alpha_{\Sigma,n-1} \) are the weights of the normal bundle of \(\Sigma\).
\end{lemma}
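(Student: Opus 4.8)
The unifying tool I would set up first is a dimension formula for the preimage of an edge. Let $e$ be an edge of $\phi(M)$ with defining subspace $\mathcal{H}_e=\R\langle\alpha_e\rangle$, so that by Proposition \ref{Prop:faceofMMP} the preimage $\phi^{-1}(e)$ is a connected component of $M^{T_e}$ with $T_e=\exp(\ker\alpha_e)$ a codimension-one subtorus. Fixing a $T$-fixed submanifold $F$ with $\phi(F)\in e$ and a point $p\in F$, the local normal form (Theorem \ref{locNormForms}) splits $T_pM$ into the weight spaces of the isotropic weights $\alpha_{F,1},\dots,\alpha_{F,n}$, and $T_e$ fixes precisely those weight spaces whose weight lies in the annihilator $(\ker\alpha_e)^{\circ}=\R\langle\alpha_e\rangle$. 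Hence
\begin{align*}
\dim\phi^{-1}(e)\;=\;2\cdot\#\{\, i : \alpha_{F,i}\in\R\langle\alpha_e\rangle\,\},
\end{align*}
where a vanishing isotropic weight is counted as lying in $\R\langle\alpha_e\rangle$. I would also record two bounds: by Corollary \ref{Cor:DimFace} one has $\dim\phi^{-1}(e)\le 4$, while the effective Hamiltonian $T/T_e\cong S^1$-action on $\phi^{-1}(e)$ has nonconstant moment map onto the $1$-dimensional edge $e$ and so forces $\dim\phi^{-1}(e)\ge 2$. Thus $\dim\phi^{-1}(e)\in\{2,4\}$, every edge direction $\alpha_e$ is parallel to at least one isotropic weight at $F$, and $e$ is fat exactly when two isotropic weights at $F$ are parallel to $\alpha_e$.

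For parts $(i)$ and $(ii)$ I take $F=\{p\}$ an isolated fixed point, so all weights $\alpha_{p,1},\dots,\alpha_{p,n}$ are nonzero. If $p$ is GKM, then by Definition \ref{Def:GKMpoint} the weights are pairwise linearly independent, so for every edge direction $\alpha_e$ exactly one weight is parallel to it; the formula gives $\dim\phi^{-1}(e)=2$ for every edge through $\phi(p)$, proving $(i)$. If $p$ is non-GKM, then by Lemma \ref{Lem:locnonGKM} exactly two weights $\alpha_{p,n-1},\alpha_{p,n}$ are heavy with $\alpha_{p,n}=\lambda\alpha_{p,n-1}$ for some $\lambda\in\Q\setminus\{0\}$, while the remaining weights are light, pairwise independent, and independent from $\alpha_{p,n-1}$. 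The only direction to which two weights are parallel is $\R\langle\alpha_{p,n-1}\rangle=\R\langle\alpha_{p,n}\rangle$, so the edge $(\phi(p)+\R\langle\alpha_{p,n-1}\rangle)\cap\phi(M)$ has $4$-dimensional preimage and is the unique fat edge, every light weight yielding a $2$-dimensional preimage. This gives $(ii)$, the stated equality of the two descriptions of the fat edge being immediate from $\alpha_{p,n}=\lambda\alpha_{p,n-1}$.

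For part $(iii)$, let $F=\Sigma$ be a fixed surface. By Lemma \ref{Lem: ComplexityDim} exactly one isotropic weight vanishes, and effectiveness forces the $n-1$ normal weights $\alpha_{\Sigma,1},\dots,\alpha_{\Sigma,n-1}$ to be an $\R$-basis of $(\mathrm{Lie}\,T)^*$; hence the local moment cone at $\phi(\Sigma)$ is simplicial with exactly the $n-1$ edge directions $\R\langle\alpha_{\Sigma,j}\rangle$. For the edge $e_j$ in direction $\alpha_{\Sigma,j}$, the weights lying in $\R\langle\alpha_{\Sigma,j}\rangle$ are the vanishing weight, i.e. the $T_p\Sigma$-direction, which $T_{e_j}$ always fixes, together with $\alpha_{\Sigma,j}$ itself and no other normal weight by independence; the formula then yields $\dim\phi^{-1}(e_j)=4$, so every such edge is fat. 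The main obstacle I anticipate is the careful bookkeeping in the dimension formula, namely verifying that $T_e$ fixes exactly the weight spaces whose weight lies in $(\ker\alpha_e)^{\circ}=\R\langle\alpha_e\rangle$ and correctly including the zero weight in the surface case, together with confirming that the edge directions emanating from a vertex are precisely the weight directions, which is where the interplay between the local normal form and the global convexity of $\phi(M)$ enters.
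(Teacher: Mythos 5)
The paper actually states Lemma \ref{Lem:meetingfatedge} without any proof, so there is no argument of the author's to compare yours against; judged on its own merits, your framework is the right one and most of it goes through. Your dimension formula $\dim\phi^{-1}(e)=2\cdot\#\{\,i:\alpha_{F,i}\in\R\langle\alpha_e\rangle\,\}$ (with zero weights counted) is correct: by Proposition \ref{Prop:faceofMMP}, $\phi^{-1}(e)$ is the connected component of $M^{T_e}$ through $p$, and its tangent space at $p$ is the sum of the weight spaces on which the character of the connected subtorus $T_e$ is trivial, i.e. those with $\alpha_{F,i}\in(\ker\alpha_e)^{\circ}$. Combined with Corollary \ref{Cor:DimFace} and the effective $T/T_e$-action, this settles part $(i)$ completely, and your part $(iii)$ is also essentially complete, because there you do prove existence of the $n-1$ edges: the normal weights are $n-1$ vectors spanning $(\text{Lie}(T))^*$ by effectiveness, hence a basis, so the local cone at $\phi(\Sigma)$ is simplicial and its extreme rays are exactly the weight directions.

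The genuine gap is the existence half of part $(ii)$. What you prove is uniqueness: any edge through $\phi(p)$ whose direction is not $\R\langle\alpha_{p,n-1}\rangle$ has $2$-dimensional preimage. But you simply call $\left(\phi(p)+\R\langle\alpha_{p,n-1}\rangle\right)\cap\phi(M)$ ``the edge'' without showing that some edge of $\phi(M)$ through $\phi(p)$ actually has this direction, and your closing remark flags only ``edge directions emanating from a vertex,'' which does not cover both cases of Lemma \ref{Lem:locnonGKM}. The two cases need separate treatment. If $\lambda>0$, so $\phi(p)$ is a vertex, the same basis argument as in your part $(iii)$ closes the gap: discarding $\alpha_{p,n}=\lambda\alpha_{p,n-1}$, the weights $\alpha_{p,1},\dots,\alpha_{p,n-1}$ are $n-1$ vectors spanning $(\text{Lie}(T))^*$, hence a basis, so the local cone $C_p$ is simplicial and $\R_{\geq 0}\,\alpha_{p,n-1}$ is one of its extreme rays, producing the fat edge. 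If $\lambda<0$, $\phi(p)$ is not a vertex but lies in the relative interior of an edge $e$; here the relevant face of $C_p$ is not an extreme ray but the lineality space $C_p\cap(-C_p)$: since $\lambda<0$ the cone contains the whole line $\R\langle\alpha_{p,n-1}\rangle$, and the direction space of the unique edge through $\phi(p)$ equals this lineality space, which, being $1$-dimensional, must be exactly $\R\langle\alpha_{p,n-1}\rangle$, so $e$ is fat. Both steps rest on the fact--local normal form (Theorem \ref{locNormForms}) together with openness of the moment map (Theorem \ref{Thm:MMisopen})--that near $\phi(p)$ the polytope coincides with $\phi(p)+C_p$ and faces of $\phi(M)$ through $\phi(p)$ correspond to faces of $C_p$; state and use this once, and both $(ii)$ and $(iii)$ become complete.
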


\begin{subsection}{The pre-toric one-skeleton}
	
	Let \(\tham\) be a Hamiltonian \(T\)-space of dimension \(2n\). Let \(p\in M^T\) be an isolated fixed point and let \(\alpha \) be a light weight at \(p\). The submanifold \(S_\alpha\)  that belongs to \(\alpha\) is a symplectic and \(T\)-invariant surface. Moreover, \(S_\alpha\) is fixed by the codimension-\(1\) subtours \(T_\alpha=\exp(\ker \alpha)\) and the action of the quotient circle \(T/T_\alpha\) on \(S_\alpha\) is effective and Hamiltonian. Since each Hamiltonian \(S^1\)-space of dimension \(2\) is a \(2\)-sphere with exactly \(2\) fixed points, we have that \(S_\alpha\) is a \(2\)-sphere and it contains precisely two points  of \(M^T\), namely \(p\) and the other  one is denoted by \(q\). Moreover, since \(T\) acts on \(TpS_\alpha\) with weight \(\alpha\), we have that \(T\) acts on \(TqS_\alpha\) with weight -\(\alpha\). In particular, -\(\alpha\) is a light weight at \(q\). 
	
	\begin{definition}\label{Def:pretoric1skelton}
	Let \(\tham\) be a Hamiltonian \(T\)-space. The pre-toric one-skeleton \(\mathcal{S}_{pre}\) of \(\tham\) is 
	the set of all submanifolds which belong to a light weight of an isolated fixed point.
	\end{definition}
	
	\begin{remark}\label{Rem:pre}
	From the discussion above, we have that \(\mathcal{S}_{pre}\) is a collection of smoothly embedded and symplectic \(2\)-spheres. Moreover, let \(p\) be an isolated fixed point and let \(k\) be the number of light weights at \(p\). Then there are precisely \(k\) spheres of \(\mathcal{S}_{pre}\) meeting at \(p\).
	\end{remark}

	Together with the discussion above, we conclude that the per-toric one-skeleton of a complexity-\(1\) space satisfies the following properties:
	
	\begin{lemma}\label{Lemma:proofpre}
	Let \(\tham\) be a complexity-one space of dimension \(2n\). Let \(M_0^T\) be the set of isolated fixed points and \(M_{GKM}^T\) be the set of GKM isolated fixed points.\\
	\(\bullet\) If \(p\in M_{GKM}^T\), then there are precisely \(n\) spheres of \(\mathcal{S}_{pre}\) meeting at \(p\), namely the submanifolds that belong to the  weights of \(p\). \\
	\(\bullet\) If \(p\in M_0^T\setminus M_{GKM}^T\), then there are precisely \((n-2)\) spheres of \(\mathcal{S}_{pre}\) meeting at \(p\), namely the submanifolds that belong to the light weights of \(p\).\\
	Moreover, the cardinality of \(\mathcal{S}_{pre}\) is 
	
	\begin{align}
	\vert \mathcal{S}_{pre} \vert \quad = \quad \frac{1}{2} n \vert M_{GKM}^T \vert  + \frac{1}{2} (n-2) \vert M_0^T\setminus M_{GKM}^T \vert.
	\end{align}
	\end{lemma}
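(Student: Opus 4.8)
The plan is to obtain the two bullet points directly from the local weight data and then to deduce the cardinality formula by a double-counting argument over the isolated fixed points.

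First I would prove the two bullet points. By Remark \ref{Rem:pre}, the number of spheres of \(\mathcal{S}_{pre}\) meeting at an isolated fixed point \(p\) equals the number of light weights at \(p\). If \(p \in M_{GKM}^T\), then by Definition \ref{Def:GKMpoint} all \(n\) isotropic weights at \(p\) are light, so exactly \(n\) spheres meet at \(p\). If instead \(p \in M_0^T \setminus M_{GKM}^T\), then by Lemma \ref{Lem:locnonGKM} exactly two of the \(n\) isotropic weights are heavy; hence \(n-2\) are light, and exactly \(n-2\) spheres meet at \(p\).

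Next I would establish the cardinality formula by counting incidences. Consider the set of pairs \((S,p)\), where \(S \in \mathcal{S}_{pre}\) and \(p\) is an isolated fixed point lying on \(S\), and count this set in two ways. Summing over the isolated fixed points and using the two bullet points just proved, together with the fact that \(M_0^T\) is the disjoint union of \(M_{GKM}^T\) and \(M_0^T \setminus M_{GKM}^T\), the number of such pairs is
\begin{align*}
n\,\vert M_{GKM}^T\vert + (n-2)\,\vert M_0^T \setminus M_{GKM}^T\vert.
\end{align*}
On the other hand, summing over the spheres, each \(S \in \mathcal{S}_{pre}\) is a \(2\)-sphere carrying an effective Hamiltonian \(T/T_\alpha \cong S^1\)-action with exactly two fixed points, which are precisely the two points of \(S \cap M^T\); so each sphere contributes exactly \(2\) to the count, provided both of these points are isolated fixed points. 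Hence \(2\vert \mathcal{S}_{pre}\vert\) equals the displayed quantity, and dividing by \(2\) yields the claimed formula.

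The step that requires care — the main obstacle — is verifying that both poles of each sphere \(S \in \mathcal{S}_{pre}\) are isolated fixed points, rather than points lying on a \(2\)-dimensional fixed submanifold, so that each sphere is counted with multiplicity exactly \(2\). This is already implicit in the discussion preceding Definition \ref{Def:pretoric1skelton}, where the second pole \(q\) of \(S_\alpha\) is described as carrying the light weight \(-\alpha\). To justify it cleanly, suppose a pole \(q\) of \(S = S_\alpha\) lay on a fixed surface \(\Sigma\). Since \(\Sigma\) is connected and contained in \(M^{T_\alpha}\), it would lie in the connected component \(S_\alpha\) of \(M^{T_\alpha}\) containing \(q\); as \(\dim \Sigma = 2 = \dim S_\alpha\), this forces \(\Sigma = S_\alpha\). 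But then \(T\) would act trivially on \(S_\alpha\), contradicting the fact that \(T\) acts on \(T_pS_\alpha\) with the nonzero weight \(\alpha\). Therefore both poles are isolated fixed points, and the double count goes through.
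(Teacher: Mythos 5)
Your proof is correct and takes essentially the same route as the paper, which states this lemma as a direct consequence of the discussion preceding Definition \ref{Def:pretoric1skelton} (each sphere of \(\mathcal{S}_{pre}\) contains exactly two fixed points carrying opposite light weights) combined with Remark \ref{Rem:pre} and Lemma \ref{Lem:locnonGKM}, i.e.\ the same incidence double-count you perform. Your explicit verification that both poles of each sphere are isolated fixed points (rather than lying on a fixed surface) fills in a detail the paper leaves implicit --- it merely asserts that \(-\alpha\) is a light weight at \(q\), which presupposes \(q\) is isolated --- so this is a welcome tightening rather than a deviation.
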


\begin{proposition}\label{Pro:pre}
Let \(\tham\) be a complexity-one space of dimension \(2n\). For an isolated fixed point \(p\in M_0^T\) let 
\(\alpha_{p,1} ,\dots, \alpha_{p,n}\) be the weights of \(p\), where the weight ordered such that \(\alpha_{p,1} ,\dots, \alpha_{p,n-2}\)  are the light weights of \(p\) if \(p\) is not GKM. Let \(\bar{\xi}\in \text{Lie}(T)\) generic and \(S^1=\exp(\R \bar{\xi})\), then

\begin{align}
\sum_{S \in \mathcal{S}_{pre}} \int_S \mu^{S^1} =& \sum_{p\in M^T_{GKM} }\dfrac{\mu^{S^1}(p)}{x} \left( \frac{1}{\Phi(\alpha_{p,1})}    +\dots +\frac{1}{\Phi(\alpha_{p,n})}      \right) \quad +\\
& \sum_{p\in M^T_0 \setminus M^T_{GKM} }\dfrac{\mu^{S^1}(p)}{x} \left( \frac{1}{\Phi(\alpha_{p,1})}    +\dots +\frac{1}{\Phi(\alpha_{p,n-2})}      \right)
\end{align}

for all \(\mu^{S^1}\in H_2^*(M;\Z)\), where \(\Phi\colon \ell_T^*\rightarrow \ell_{S^1}^*\) is the restriction.

\end{proposition}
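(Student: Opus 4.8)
The plan is to apply the ABBV Localization formula (Theorem \ref{Thm:ABBV}) sphere by sphere and then regroup the resulting fixed-point contributions according to the isolated fixed points. Fix a generic $\bar{\xi}\in\text{Lie}(T)$ (Definition \ref{Def:generic}) and set $S^1=\exp(\R\bar{\xi})$, so that $M^{S^1}=M^T$. Each $S\in\mathcal{S}_{pre}$ is a $T$-invariant, hence $S^1$-invariant, embedded symplectic $2$-sphere, and by the discussion preceding Definition \ref{Def:pretoric1skelton} it contains exactly two points of $M^T$. Since $\bar{\xi}$ is generic, these are precisely the two $S^1$-fixed points of $S$; in particular the restricted weights at them are nonzero, so the denominators appearing below are well defined.

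First I would record the local picture at the two endpoints of such a fixed sphere. If $S=S_\alpha$ belongs to the light weight $\alpha$ at one endpoint $p$, then $T$ acts on $T_pS$ with weight $\alpha$ and, as noted in Section \ref{Sec:MMP} before Definition \ref{Def:pretoric1skelton}, on $T_qS$ with weight $-\alpha$ at the other endpoint $q$. Restricting along $\Phi\colon\ell_T^*\to\ell_{S^1}^*$, the $S^1$-action has weight $\Phi(\alpha)$ at $p$ and $\Phi(-\alpha)=-\Phi(\alpha)$ at $q$. Applying the isolated-point case of Theorem \ref{Thm:ABBV} to the $S^1$-action on the $2$-sphere $S$ (two isolated fixed points, a single weight at each) then gives
\begin{align*}
\int_S\mu^{S^1}=\frac{\mu^{S^1}(p)}{\Phi(\alpha)\,x}+\frac{\mu^{S^1}(q)}{-\Phi(\alpha)\,x}.
\end{align*}

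Next I would sum this identity over all $S\in\mathcal{S}_{pre}$ and reorganize by fixed point. The key bookkeeping observation is that the assignment sending a pair (sphere $S\in\mathcal{S}_{pre}$, endpoint of $S$) to the pair (that endpoint, the light weight to which $S$ belongs there) is a bijection onto the set of pairs $(p,\beta)$ with $p\in M_0^T$ and $\beta$ a light weight at $p$: each light weight at $p$ determines a unique sphere of $\mathcal{S}_{pre}$ through $p$, and each sphere accounts for exactly two such pairs, one at each endpoint, with the two relevant light weights being $\alpha$ and $-\alpha$. Hence the totality of endpoint contributions above is
\begin{align*}
\sum_{S\in\mathcal{S}_{pre}}\int_S\mu^{S^1}=\sum_{p\in M_0^T}\frac{\mu^{S^1}(p)}{x}\sum_{\beta\text{ light at }p}\frac{1}{\Phi(\beta)}.
\end{align*}

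Finally I would split this sum according to whether $p$ is GKM. For $p\in M_{GKM}^T$ all $n$ weights are light, while for $p\in M_0^T\setminus M_{GKM}^T$ exactly two weights are heavy by Lemma \ref{Lem:locnonGKM}, leaving the $n-2$ light weights $\alpha_{p,1},\dots,\alpha_{p,n-2}$ under the stated ordering; substituting these counts yields the two sums in the claimed formula. I expect the only genuine obstacle to lie in the sign and index bookkeeping of the regrouping step: one must keep track that each sphere contributes at its second endpoint with the opposite restricted weight $-\Phi(\alpha)$, and that genericity of $\bar{\xi}$ is precisely what guarantees every $\Phi(\beta)$ is nonzero, so that each pointwise ABBV term is defined.
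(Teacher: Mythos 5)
Your proof is correct and takes essentially the same route as the paper: apply the ABBV formula to each sphere of \(\mathcal{S}_{pre}\), using that its two endpoints carry restricted weights \(\Phi(\alpha)\) and \(-\Phi(\alpha)\), and then regroup the contributions by isolated fixed point, splitting into GKM and non-GKM points. Your explicit bijection between (sphere, endpoint) pairs and (fixed point, light weight) pairs is precisely the counting content of Lemma \ref{Lemma:proofpre}, which is what the paper invokes for the regrouping step.
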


\begin{proof}
Let \(S\in \mathcal{S}_{pre}\) with \(S\cap M^T=\left\lbrace p,q\right\rbrace \), then 
\(T\) acts on \(TpS\) with weight \(\alpha \in \ell^* \setminus \left\lbrace 0\right\rbrace \), where  \(\alpha\) is a light
weight at \(p\) and  \(T\) acts on \(TqS\) with weight -\(\alpha\), which is also a light weight at \(q\).
So this implies that \(S^1\cong \exp(\R \bar{\xi})\) acts on \(T_pS\) with weight \(\Phi(\alpha)\)  and on \(T_qS\) with weight \(-\Phi(\alpha)=\Phi(-\alpha)\).\\
Now let \(\mu^{S^1}\in H_{S^1}^2(M;\Z)\), then ABBV formula gives us 
\begin{align}
\int_S \mu^{S^1} =\dfrac{\mu^{S^1}(p)}{\Phi(\alpha)\cdot x} \quad + \quad \dfrac{\mu^{S^1}(q)}{\Phi(-\alpha)\cdot x}. 
\end{align}
The claim follows form this and Lemma \ref{Lemma:proofpre}.
\end{proof}

\end{subsection}

\end{subsection}

\end{section}

\begin{section}{Hamiltonian \(T\)-spaces of dimension \(4\)}\label{Sec:dim4}
Let \((F^4, \omega,T^2, \psi)\) be a symplectic toric manifold of dimension \(4\). Then \(\psi(F)\) is a Delzant  polytope and \((F, \omega,T, \psi)\) is determined by this polytope (up to \(T^2\)-equivariant symplectomorphims) (see \cite{delzant}). We fix a splitting \(T^2=S^1\times S^1\). So we identify \(\text{Lie}(T)=\R^2\) and \((\text{Lie}(T))^*=(\R^2)^*=\R^2\). Let \(E_{\psi(F)}\) be the set of edges of \(\psi(F)\). Then

\begin{align}
S_e =\psi^{-1}(e)
\end{align}

is a symplectic embedded \(2\)-sphere for all \(e \in E_{\psi(F)}\). Moreover, \(\left\lbrace S_e \mid e \in E_{\psi(F)}\right\rbrace  \) is a toric  one-skeleton for \((F^4, \omega,T^2, \psi)\).\\
If we restrict the action to the subcircle \(S^1=\left\lbrace 1\right\rbrace \times S^1 \subset T^2\), we get an \(4\)-dimensional Hamiltonian \(S^1\)-space \((F,\omega,S^1, \phi)\), where \(\phi= \text{pr}_2 \circ \psi\)
and \( \text{pr}_2\colon \R^2 \rightarrow \R \) is the projection onto the second factor. Moreover, we have\\
\(\bullet\) If \(e\in E_{\psi(F)}\) is a horizontal edge, then \(S_e\) is fixed by the \(S^1\)-action.\\
\(\bullet\) If \(e\in E_{\psi(F)}\) is not a horizontal edge, then let \(v_1\) and \(v_2\) be the vertices of \(\psi(F)\) connected by this edge. W.l.o.g. we assume that \(\text{pr}_2(v_1)<\text{pr}_2(v_2)\). Let \(\frac{k_e}{b_e}\) the slope of this edge\footnote{
	\(k_e ,b_e \in \Z_{>0}  \) with \(gcd(k_e,b_e)=1\)  if \(e\) is not a vertical edge. Otherwise the slope is \(\infty\) and \(k_e=1\).

} and \(p_i=\phi^{-1}(v_i)\) for \(i=1,2\). Then  \(S^1\) acts on \(S_e\) with fixed points \(p_1\) and \(p_2\) and \(k_e\) is the  weight at \(p_1\) and \(-k_e\) is the weight at \(p_2\). Moreover, let \(\mu^{S^1}\in H_{S^1}^2(F;\Z)\) then the ABBV formula gives us 
\begin{align}\label{Eq:1}
\int_{S_e} \mu^{S^1}= \dfrac{\mu^{S^1}(p_1)-\mu^{S^1}(p_2)}{x\cdot k_e}.
\end{align}

\begin{definition}\label{Def:Reducedtoric1sekelton}
	Let \((F,\omega, S^1, \phi)\) be a Hamiltonian \(S^1\)-space  of dimension \(4\) such that the \(S^1\)-action extends to an effective Hamiltonian \(T^2\)-action. Let \(\psi\colon F \rightarrow (\text{Lie}(T^2))^*\) a moment map for the \(T^2\)-action. We call
\begin{align*}
\mathcal{\bar{S}}_F = \left\lbrace S= \psi^{-1}(e) \mid e \in E_{\psi(F)}  \right\rbrace  \setminus F^{S^1}_{2}
\end{align*}
the \textbf{reduced toric one-skeleton } of \((F, \omega, S^1, \phi)\) (with respect to this extension of  the \(S^1\)-action), where \(F_2^{S^1}\) is the set of 2-dimensional components of \(F^{S^1}\).

\end{definition}

\begin{lemma}\label{Lem:IntRedtoric1seleton}
	Let \((F,\omega, S^1, \phi)\) be a Hamiltonian \(S^1\)-space  of dimension \(4\) such that the \(S^1\)-action extends to an effective Hamiltonian \(T^2\)-action and let \(\mathcal{\bar{S}}_F\) be the reduced toric one-skeleton. Moreover, let \(F_0^{S^1}\) be the set of isolated fixed point and \(F_2^{S^1}\) be the set of fixed \(2\)-spheres. Then

\begin{align*}
	\sum_{S_\in \mathcal{\bar{S}}_F} \int_{S} \mu^{S^1} = 2 \sum_{\Sigma \in F_2^{S^1}}  \mathcal{E}_\Sigma \dfrac{\pi_\Sigma(\mu^{S^1}\vert_\Sigma)}{x} \quad + \quad\sum_{p \in F_0^{S^1}}
		\dfrac{\mu^{S^1}(p)}{x}\left( \frac{1}{\lambda_{p,1}}+\frac{1}{\lambda_{p,2}}\right),
		  \end{align*}	

for \(\mu^{S^1}\in H_{S^1}^2(M;\Z)\), where \(\lambda_{p1}\) and \(\lambda_{p2}\) are the weights of the \(S^1\)-action at \(p\in F_0^{S^1}\) and
\begin{align*}
\mathcal{E}_\Sigma \colon = \begin{cases}
+1 \text{ if is \(\phi\) attains its minimum at } \Sigma,\\
-1 \text{  if is \(\phi\) attains its maximum at } \Sigma.
\end{cases}
\end{align*}

\end{lemma}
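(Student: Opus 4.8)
The plan is to sum the localisation identity \eqref{Eq:1} over the non-horizontal edges of $\psi(F)$ and then to reorganise the result by collecting, at each vertex of $\psi(F)$, the contributions of the edges meeting there. Write $S^1=\left\lbrace 1\right\rbrace \times S^1$ and $\phi=\text{pr}_2\circ\psi$. By the discussion preceding Definition \ref{Def:Reducedtoric1sekelton}, the horizontal edges are exactly those whose preimage is fixed by $S^1$, so $\mathcal{\bar{S}}_F$ consists precisely of the spheres $S_e=\psi^{-1}(e)$ with $e$ non-horizontal. For such an edge, with endpoints $p_1$ (the one where $\phi$ is smaller) and $p_2$, formula \eqref{Eq:1} can be written symmetrically as
\[
\int_{S_e}\mu^{S^1}=\frac{\mu^{S^1}(p_1)}{x\, w_{e,p_1}}+\frac{\mu^{S^1}(p_2)}{x\, w_{e,p_2}},
\]
where $w_{e,p}$ is the weight of the $S^1$-action on $T_pS_e$ (so $w_{e,p_1}=k_e$ and $w_{e,p_2}=-k_e$). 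Summing over non-horizontal edges and interchanging the order of summation yields
\[
\sum_{S\in\mathcal{\bar{S}}_F}\int_{S}\mu^{S^1}=\sum_{v}\frac{\mu^{S^1}(\psi^{-1}(v))}{x}\sum_{\substack{e\ni v\\ e\text{ non-horizontal}}}\frac{1}{w_{e,v}},
\]
the outer sum running over the vertices $v$ of $\psi(F)$.

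Next I would read off $w_{e,v}$ from the polytope. At a vertex $v$ the two edges leave $v$ in the primitive inward directions $\beta_1,\beta_2\in\Z^2$, which are the $T^2$-isotropy weights at $\psi^{-1}(v)$; restricting to $S^1=\left\lbrace 1\right\rbrace\times S^1$ shows that the $S^1$-weight along the edge in direction $\beta$ equals $\text{pr}_2(\beta)$. Thus an edge is horizontal iff its $S^1$-weight is zero, and for a non-horizontal edge $w_{e,v}=\text{pr}_2(\beta)$ is positive (resp.\ negative) exactly when $e$ leaves $v$ going upward (resp.\ downward) --- consistent with $w_{e,p_1}=k_e>0$ and $w_{e,p_2}=-k_e$. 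By the Delzant condition the two edges at $v$ are linearly independent, so at most one is horizontal; and since $\psi(F)$ is a convex polygon, a horizontal edge can occur only at the bottom or the top, i.e.\ at the minimum or maximum of $\phi$ (which is what makes $\mathcal{E}_\Sigma$ well defined). Hence every vertex $v$ is of one of two types.

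If both edges at $v$ are non-horizontal, then $p=\psi^{-1}(v)$ is an isolated fixed point, its two weights are $\lambda_{p,1},\lambda_{p,2}$, and its contribution to the inner sum is $\frac{\mu^{S^1}(p)}{x}\left(\frac{1}{\lambda_{p,1}}+\frac{1}{\lambda_{p,2}}\right)$; summing over all such $v$ reproduces the second sum in the statement. If exactly one edge at $v$ is horizontal, then $p=\psi^{-1}(v)$ lies on a fixed $2$-sphere $\Sigma$, and only the unique non-horizontal edge at $v$ contributes. Here I would group the two endpoints $v,v'$ of the horizontal edge carrying $\Sigma$. The crucial point is the Delzant condition at such a vertex: the primitive horizontal direction and the primitive non-horizontal direction $\beta=(a,b)$ form a $\Z$-basis, forcing $|b|=1$, so $w_{e,v}=\text{pr}_2(\beta)=b=\pm1$. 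As $\Sigma$ realises the minimum (resp.\ maximum) of $\phi$, the non-horizontal edges at both $v$ and $v'$ go upward (resp.\ downward), whence $w_{e,v}=w_{e,v'}=\mathcal{E}_\Sigma$. Finally $v$ and $v'$ lie on the same connected component $\Sigma$ of $F^{S^1}$, so $\mu^{S^1}$ takes the same value $\pi_\Sigma(\mu^{S^1}\vert_\Sigma)$ at both; the two endpoints therefore contribute
\[
\frac{\pi_\Sigma(\mu^{S^1}\vert_\Sigma)}{x}\left(\frac{1}{w_{e,v}}+\frac{1}{w_{e,v'}}\right)=2\,\mathcal{E}_\Sigma\,\frac{\pi_\Sigma(\mu^{S^1}\vert_\Sigma)}{x},
\]
and summing over all fixed $2$-spheres reproduces the first sum. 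The main obstacle is precisely this last bookkeeping at the fixed $2$-spheres: one must verify that the transverse weight is exactly $\pm1$ (this is where smoothness/Delzantness is used) and that both endpoints of $\Sigma$ carry the same sign, governed by whether $\Sigma$ realises the minimum or the maximum of $\phi$; everything else is a routine reindexing of \eqref{Eq:1}.
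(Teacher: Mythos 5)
Your proof is correct and follows essentially the same route as the paper: both sum the per-sphere ABBV identity over the spheres of $\mathcal{\bar{S}}_F$ and regroup the double sum by the $S^1$-fixed components, using that each isolated fixed point meets two spheres with weights $\lambda_{p,1},\lambda_{p,2}$ and each fixed surface meets two spheres with transverse weight $\mathcal{E}_\Sigma=\pm 1$. The only difference is cosmetic (you index by vertices and edges of the Delzant polygon rather than by spheres and their fixed points), and in fact you supply the Delzant-basis argument for the $\pm 1$ transverse weight at a fixed surface, which the paper asserts without justification.
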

\begin{proof}
Let \(p\) be an isolated fixed point. Then there are precisely two \(2\)-spheres \(S_{p1}, S_{p,2}\) of \(\mathcal{\bar{S}}_F\)  meeting at \(p\), each belongs to one weight at \(\lambda_{p,1}, \lambda_{p,2}\). In particular, \(S^1\) acts on \(T_{p_i}S_{p,1}\) with weight \(\lambda_{p,i}\) for \(i=1,2\).\\
 For a fixed surface \(\Sigma\) there are precisely two \(2\)-spheres \(S_{\Sigma,1}, S_{\Sigma,2}\) of \(\mathcal{\bar{S}}_F\) that intersect with \(\Sigma\). Moreover, \(\Sigma\cap S_{\Sigma,i}\) contains one point \(p_{\Sigma,i}\) and \(S^1\) acts on \(T_{p_{\Sigma,i}}S_{\Sigma,i}\) with weight \(+1\) resp. \(-1\) if \(\phi\) attains its minimum reps. maximum at \(\Sigma\) .\\
 Now for \(S\in \mathcal{\bar{S}}_F\) and  \(p\in S\cap F^{S^1}\) let \(\lambda_{S,p}\) be the weight of \(S^1\)-action \(T_pS\), then the ABBV formula gives us
 
 \begin{align*}
 \int_{S} \mu^{S^1} = \sum_{p\in S \cap F^{S^1}} \left( \dfrac{\mu^{S^1}(p)}{x} \dfrac{1}{\lambda_{S,p}}\right) .
 \end{align*}
 We conclude 
 \begin{align*}
 \sum_{S_\in \mathcal{\bar{S}}_F} \int_{S} \mu^{S^1} &= \sum_{S_\in \mathcal{\bar{S}}_F} \left(  \sum_{p\in S \cap F^{S^1}} \left( \dfrac{\mu^{S^1}(p)}{x} \dfrac{1}{\lambda_{S,p}}\right) \right) \\
&= 2 \sum_{\Sigma \in F_2^{S^1}} \mathcal{E}_\Sigma \dfrac{\pi_\Sigma(\mu^{S^1}\vert_\Sigma)}{x} \quad + \quad\sum_{p \in F_0^{S^1}}
 \dfrac{\mu^{S^1}(p)}{x}\left( \frac{1}{\lambda_{p,1}}+\frac{1}{\lambda_{p,2}}\right).
 \end{align*}

\end{proof}

\begin{subsection}{Reduced toric one-skeletons of fat edges}
	Let \(\tham\) be a complexity-one space of dimension \(2n\) with \(\bar{\xi} \in \text{Lie}(T)\) generic.
	Let 
	\begin{align}
	e= \left\lbrace \R\left\langle \alpha_e\right\rangle + \beta_e \right\rbrace \cap \phi(M)
	\end{align}
	 be a fat edge of the moment map polytope and \(F_e=\phi^{-1}(e)\) the corresponding isotropic submanifold of dimension \(4\). We have two circle actions on \(F_e\), namely the one of the subcircle \(\exp(\R \bar{\xi})\) and the one of the quotient circle \(T/\exp(\ker(\alpha_e))\). In order to distinguish them, we denote by \(S^1\) the subcircle \(\exp(\R \bar{\xi})\) and by \(\mathcal{C}\) the quotient circle \(T/\exp(\ker(\alpha_e))\).

	 We assume that \(\alpha_e \in \ell_T^*\) is primitive. If \(p\in M^T\cap F_e\), then \(T\) acts on \(T_pF_e\) with weights \(\lambda_{p,1}\alpha_e\) and \(\lambda_{p,2}\alpha_e\), where \(\lambda_{p,1},\lambda_{p,2} \in \Z\). Hence, the quotient \(\mathcal{C}\) acts on \(T_pF_e\) with weights \(\lambda_{p,1},\lambda_{p,2} \in \Z\). Moreover let \(\Phi\colon \ell_T^*\rightarrow \ell_{S^1}^*\) the restriction map, then \(S^1\) acts on \(T_pF_e\) with weights \(\Phi(\lambda_{p,1}\alpha_e)\) and \(\Phi(\lambda_{p,2}\alpha_e)\).

	\begin{lemma}\label{Lem:IntRedtoric1seleton2}
	Assume that the \(\mathcal{C}\)-action on \(F_e\) extends to an effective Hamiltonian \(T^2\)-action on \(F_e\). Let \(\mathcal{\bar{S}}_{F_e}\) the corresponding reduced toric one-skeleton. For \(p\in M^T\cap F_e\) let \(\alpha_{p,n-1},\alpha_{p,n}\) be the heavy weights of the \(T-\)action on \(T_pM\) and for \(\Sigma \in M_2^T \cap F_e\) let \(\alpha_{\Sigma,e}\) be the weight  of the \(T-\)action on the normal bundle of \(\Sigma\) in \(F_e\). Then
	
	\begin{align*}
	\sum_{S_\in \mathcal{\bar{S}}_{F_e}}\int_{S} \mu^{S^1} = 2 \sum_{\Sigma \in M_2^{S^1}\cap F_e}\dfrac{\pi_\Sigma(\mu^{S^1}\vert_\Sigma)}{x} \dfrac{1}{\Phi (\alpha_{\Sigma e})} \quad + \quad\sum_{p \in M_0^{S^1}\cap F_e}
	\dfrac{\mu^{S^1}(p)}{x}\left( \frac{1}{\Phi(\alpha_{p,n-1})}+\frac{1}{\Phi(\alpha_{p,n})}\right),
	\end{align*}	
	
	for \(\mu^{S^1}\in H_{S^1}^2(M;\Z)\).
	\end{lemma}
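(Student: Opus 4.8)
The plan is to run the argument of Lemma \ref{Lem:IntRedtoric1seleton} once more, but now for the subcircle \(S^1=\exp(\R\bar{\xi})\) in place of the distinguished subcircle of the \(T^2\)-extension, and to read off the resulting weights through the restriction \(\Phi\). First I would record the geometric input. Since \(\mathcal{C}=T/\exp(\ker(\alpha_e))\) is the effective restriction of the ambient \(T\)-action to \(F_e\) and sits inside \(T^2\) as a subcircle, every sphere \(S\in\mathcal{\bar{S}}_{F_e}\) is \(T^2\)-invariant, hence \(T\)-invariant, hence \(S^1\)-invariant; moreover, because \(\bar{\xi}\) is generic (Definition \ref{Def:generic}), the \(S^1\)-fixed set of \(F_e\) equals \(M^T\cap F_e\), and all the restrictions by \(\Phi\) appearing below are nonzero. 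Thus the combinatorics of which spheres meet which fixed locus is the same as in Lemma \ref{Lem:IntRedtoric1seleton}: exactly two spheres of \(\mathcal{\bar{S}}_{F_e}\) pass through each isolated fixed point \(p\in M_0^{S^1}\cap F_e\), and exactly two spheres \(S_{\Sigma,1},S_{\Sigma,2}\) meet each fixed surface \(\Sigma\in M_2^{S^1}\cap F_e\), at two points \(p_{\Sigma,1},p_{\Sigma,2}\).

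Next I would identify the \(S^1\)-weight on the tangent line \(T_pS\) for each sphere \(S\) and each \(p\in S\cap F_e^{S^1}\). An isolated fixed point \(p\in F_e\) is necessarily non-GKM, since both weights of \(T_pF_e\) are multiples of \(\alpha_e\) and hence linearly dependent; by Lemma \ref{Lem:meetingfatedge} its heavy weights \(\alpha_{p,n-1}=\lambda_{p,1}\alpha_e\) and \(\alpha_{p,n}=\lambda_{p,2}\alpha_e\) are precisely the weights of \(T_pF_e\). The two spheres of \(\mathcal{\bar{S}}_{F_e}\) through \(p\) are the two \(T^2\)-invariant spheres at the corresponding vertex, so their tangent lines are the two \(T\)-weight spaces of \(T_pF_e\); hence the \(S^1\)-weights there are \(\Phi(\alpha_{p,n-1})\) and \(\Phi(\alpha_{p,n})\), matching the values \(\Phi(\lambda_{p,i}\alpha_e)\) recorded before the statement. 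For a fixed surface \(\Sigma\), the two spheres meeting it are transverse to \(\Sigma\), so at each \(p_{\Sigma,i}\) the tangent line \(T_{p_{\Sigma,i}}S_{\Sigma,i}\) equals the normal line to \(\Sigma\) in \(F_e\), whose \(T\)-weight is \(\alpha_{\Sigma,e}\); the \(S^1\)-weight is therefore \(\Phi(\alpha_{\Sigma,e})\) at both points.

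I would then assemble the sum exactly as in the proof of Lemma \ref{Lem:IntRedtoric1seleton}. For each \(S\in\mathcal{\bar{S}}_{F_e}\) the ABBV formula (Theorem \ref{Thm:ABBV}) gives \(\int_S\mu^{S^1}=\sum_{p\in S\cap F_e^{S^1}}\frac{\mu^{S^1}(p)}{x}\frac{1}{\lambda_{S,p}}\), where \(\lambda_{S,p}\) denotes the \(S^1\)-weight on \(T_pS\). Summing over \(\mathcal{\bar{S}}_{F_e}\) and regrouping by fixed locus, the two spheres at an isolated point \(p\) contribute \(\frac{\mu^{S^1}(p)}{x}\big(\frac{1}{\Phi(\alpha_{p,n-1})}+\frac{1}{\Phi(\alpha_{p,n})}\big)\), while the two spheres meeting \(\Sigma\) contribute, using \(\mu^{S^1}(p_{\Sigma,1})=\mu^{S^1}(p_{\Sigma,2})=\pi_\Sigma(\mu^{S^1}\vert_\Sigma)\), the term \(2\,\frac{\pi_\Sigma(\mu^{S^1}\vert_\Sigma)}{x}\frac{1}{\Phi(\alpha_{\Sigma,e})}\). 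This is precisely the asserted identity.

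The main obstacle is not the bookkeeping but the weight identification, together with seeing why one cannot simply quote Lemma \ref{Lem:IntRedtoric1seleton}. That lemma is phrased for the canonical subcircle of the \(T^2\)-extension, whose normal weights at the extremal fixed surfaces are forced to be \(\pm 1\) by the Delzant condition, which is exactly what produces the signs \(\mathcal{E}_\Sigma\); here \(S^1=\exp(\R\bar{\xi})\) is an arbitrary generic subcircle and its normal weight at \(\Sigma\) is \(\Phi(\alpha_{\Sigma,e})\), generally not \(\pm 1\). The crux is therefore the compatibility between the \(T^2\)-toric structure of \(F_e\) and the ambient \(T\)-action, namely that the invariant spheres are tangent to the ambient \(T\)-weight spaces, which is what allows every weight to be transported faithfully through \(\Phi\).
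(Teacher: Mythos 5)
Your proof is correct and takes essentially the same route as the paper: the paper's own proof consists of declaring the argument to be \emph{the same} as that of Lemma \ref{Lem:IntRedtoric1seleton} (ABBV applied sphere by sphere, then regrouped by the fixed components), which is precisely what you carry out, with the \(S^1\)-weights read off through the restriction \(\Phi\). Your explicit identification of the tangent lines of the \(T^2\)-invariant spheres with the ambient \(T\)-weight spaces of \(T_pF_e\) and the normal line of \(\Sigma\) in \(F_e\), and your observation that the signs \(\mathcal{E}_\Sigma\) of Lemma \ref{Lem:IntRedtoric1seleton} are absorbed into \(\Phi(\alpha_{\Sigma,e})\), supply details that the paper leaves implicit.
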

The proof of Lemma \ref{Lem:IntRedtoric1seleton} is the same as those of Lemma \ref{Lem:IntRedtoric1seleton2}.
Next, we consider what happen if we pick for each fat edge a reduced toric  one-skeleton.

	%\Sigma(\mu^{S^1}\vert_\Sigma)
	
	\begin{proposition}\label{Pro:Int3}
		
		Let \(\tham\) be a complexity-one space of dimension \(2n\) which satisfies the Extension-Property \((\mathcal{P}_E)\). For each fat edge \(e\in E_{\phi(M)}^{fat}\), we pick one reduced toric one-skeleton \(\mathcal{\bar{S}}_{F_e}\)  of \(F_e\).\\
		For a fixed \(2\)-sphere \(\Sigma \in M_2^T\) let \(\alpha_{\Sigma,1}, \dots , \alpha_{\Sigma,n-1}\) be the weights of the normal bundle of \(\Sigma\) and for a non-GKM isolated  fixed point  \(p \in M_0^T\setminus M_{GKM}^T\) let \(\alpha_{p,n-1} , \alpha_{p,n}\) be the heavy weights of \(p\). Then

		\begin{align}
	\sum_{e\in E_{\phi(M)}^{fat}} \left( \sum_{S \in \hat{\mathcal{S}}_{F_e}} \int_S \mu^{S^1} \right) =& 2\sum_{\Sigma \in M_2^T }\dfrac{\pi_\Sigma(\mu^{S^1}\vert_\Sigma)}{x} \left( \frac{1}{\Phi(\alpha_{\Sigma,1})}    +\dots +\frac{1}{\Phi(\alpha_{\Sigma,n-1})}      \right) \quad +\\
		& \sum_{p\in M^T_0 \setminus M^T_{GKM} }\dfrac{\mu^{S^1}(p)}{x} \left( \frac{1}{\Phi(\alpha_{p,n-1})}   +\frac{1}{\Phi(\alpha_{p,n})}      \right)
		\end{align}
		
		for \(\mu^{S^1}\in H_{S^1}^2(M;\Z)\),  where \(\bar{\xi}\in \text{Lie}(T)\) is generic, \(S^1=\exp(\R \bar{\xi})\) and \(\Phi\colon \ell_T^*\rightarrow \ell_{S^1}^*\) is the restriction.
		
	\end{proposition}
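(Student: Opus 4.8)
The plan is to sum the conclusion of Lemma~\ref{Lem:IntRedtoric1seleton2} over all fat edges $e \in E_{\phi(M)}^{fat}$ and then reorganize the resulting double sum by grouping the contributions according to which fixed submanifold (either a fixed $2$-sphere $\Sigma \in M_2^T$ or a non-GKM isolated fixed point $p \in M_0^T \setminus M_{GKM}^T$) produces them. For a single fat edge, Lemma~\ref{Lem:IntRedtoric1seleton2} expresses $\sum_{S \in \bar{\mathcal{S}}_{F_e}} \int_S \mu^{S^1}$ as a sum over the fixed points and fixed surfaces of $F_e$ that lie in $M^{S^1} = M^T$ (using that $\bar{\xi}$ is generic). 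So after summing over all fat edges, each fixed component of $M$ will contribute once for every fat edge it lies on, and the main task is to count these incidences correctly.

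The key combinatorial input is Lemma~\ref{Lem:meetingfatedge}. First I would use part $(i)$ to observe that GKM isolated fixed points lie on no fat edge, so they never appear in the double sum; this is why only non-GKM points survive on the right-hand side. Next, by part $(ii)$, a non-GKM isolated fixed point $p$ lies on exactly one fat edge, and the two weights governing the $\mathcal{C}$-action on $T_p F_e$ are precisely the heavy weights $\alpha_{p,n-1}, \alpha_{p,n}$ of $p$; hence each such $p$ contributes the single term $\frac{\mu^{S^1}(p)}{x}\bigl(\frac{1}{\Phi(\alpha_{p,n-1})} + \frac{1}{\Phi(\alpha_{p,n})}\bigr)$, matching the second line of the claim exactly once. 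Finally, by part $(iii)$, a fixed $2$-sphere $\Sigma \in M_2^T$ sits at a vertex where all $n-1$ edges are fat, and the normal weights $\alpha_{\Sigma,1}, \dots, \alpha_{\Sigma,n-1}$ are exactly the weights attached to these $n-1$ fat edges. For each such edge $e$, the surface $\Sigma$ appears as a fixed surface of $F_e$, and Lemma~\ref{Lem:IntRedtoric1seleton2} contributes the term $2\frac{\pi_\Sigma(\mu^{S^1}\vert_\Sigma)}{x}\frac{1}{\Phi(\alpha_{\Sigma,e})}$ with $\alpha_{\Sigma,e}$ equal to the corresponding normal weight $\alpha_{\Sigma,i}$; summing over the $n-1$ fat edges through the vertex yields $2\frac{\pi_\Sigma(\mu^{S^1}\vert_\Sigma)}{x}\sum_{i=1}^{n-1}\frac{1}{\Phi(\alpha_{\Sigma,i})}$, which is the first line of the claim.

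The step I expect to require the most care is the bookkeeping in part $(iii)$: I must verify that the weight $\alpha_{\Sigma,e}$ appearing in Lemma~\ref{Lem:IntRedtoric1seleton2} for the edge $e$ through the vertex $\phi(\Sigma)$ is genuinely the normal weight $\alpha_{\Sigma,i}$ indexing that edge, so that as $e$ ranges over the $n-1$ fat edges at the vertex the reciprocals $\frac{1}{\Phi(\alpha_{\Sigma,i})}$ are each counted once and none is repeated or omitted. This is exactly the content of the identification in Lemma~\ref{Lem:meetingfatedge}$(iii)$, which pairs the fat edges at $\phi(\Sigma)$ bijectively with the normal weights $\alpha_{\Sigma,1}, \dots, \alpha_{\Sigma,n-1}$, and it guarantees that a given $\Sigma$ contributes to exactly the edges recorded by its normal bundle. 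Once this bijection is in hand, interchanging the order of summation and collecting terms gives the stated formula directly.
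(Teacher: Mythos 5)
Your proposal is correct and follows essentially the same route as the paper: both sum Lemma~\ref{Lem:IntRedtoric1seleton2} over the fat edges and regroup the contributions using the incidence facts of Lemma~\ref{Lem:meetingfatedge} --- non-GKM points lying on exactly one fat edge, and the $n-1$ fat edges at $\phi(\Sigma)$ corresponding bijectively to the normal weights $\alpha_{\Sigma,1},\dots,\alpha_{\Sigma,n-1}$. Your explicit use of part $(i)$ to exclude GKM points is a detail the paper leaves implicit, but the argument is the same.
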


\begin{proof}
For a fixed \(2\)-sphere \(\Sigma\) there are precisely \((n-1)\) fat edges meeting at the vertex \(\phi(\Sigma)\), each belongs to a weight \(\alpha_{\Sigma,1}, \dots , \alpha_{\Sigma,n-1}\) of the normal bundle of \(\Sigma\) in \(M\). In particular,
\begin{align}
\left\lbrace  \alpha_{\Sigma,1}, \dots , \alpha_{\Sigma,n-1}\right\rbrace = 
\left\lbrace    \alpha_{\Sigma,e}   \mid e \in E_{\phi(M)}^{fat}  \text{ such that  \(\phi(\Sigma)\)   is a vertex of \(e\) }                                                         \right\rbrace ,
\end{align}
where \(\alpha_{\Sigma,e}\) is the weight of the normal bundle of \(\Sigma\) in \(F_e\).\\
For a non-GKM point \(p\in M_0^T\setminus M_{GKM}^T\), we have \(p\in F_e\) for precisely one \(e\in E_{\phi(M)}^{fat}\). In particular, 

\begin{align*}
M_0^T\setminus M_{GKM}^T  = \bigcup_{e \in E_{\phi(M)}^{fat}} \left( M_0^T \cap F_e\right) 
\end{align*}
Together with Lemma \ref{Lem:IntRedtoric1seleton2} the claim follows.
\end{proof}

\begin{lemma}\label{Lem:Cardi}
Let \(\tham\) be a complexity-one space of dimension \(2n\) that satisfies the Extension-Property \((\mathcal{P}_E)\). For a fat edge  \(e\in E_{\phi(M)}^{fat}\), we pick one reduced toric one-skeleton \(\mathcal{\bar{S}}_{F_e}\)  of \(F_e\). Then the cardinality of \(\cup_{e \in E_{\phi(M)}^{fat}}  \mathcal{\bar{S}}_{F_e}\) is 
\begin{align}
\vert \cup_{e \in E_{\phi(M)}^{fat}}  \mathcal{\bar{S}}_{F_e} \vert =  \vert M_0^T \setminus     M_{GKM}^T \vert + (n-1) \vert M_2^T \vert. 
\end{align}

\end{lemma}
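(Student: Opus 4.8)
The plan is to count the spheres in the reduced toric one-skeleton by distributing the count over the fixed point data, using the combinatorial facts already established in Lemma \ref{Lem:meetingfatedge} and the definition of the reduced toric one-skeleton. First I would recall that for each fat edge $e$, the reduced toric one-skeleton $\mathcal{\bar{S}}_{F_e}$ consists of the $2$-spheres $\psi^{-1}(\tilde{e})$ for edges $\tilde{e}$ of the Delzant polytope of $F_e$ (with respect to the chosen $T^2$-extension), \emph{minus} the fixed $2$-spheres $F_2^{S^1}$. Thus each $S \in \mathcal{\bar{S}}_{F_e}$ is a $2$-sphere on which the subcircle $S^1$ acts non-trivially, and its $S^1$-fixed points are exactly the two points $S \cap F_e^{S^1}$. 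Since the spheres in $\mathcal{\bar{S}}_{F_e}$ are precisely the edges of the Delzant polytope that are not horizontal, the natural counting strategy is to count incidences between these spheres and the fixed point set $F_e^{S^1}$.

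The key step is to exploit the local incidence structure proved in the analysis of Lemma \ref{Lem:IntRedtoric1seleton2}: at each isolated fixed point $p \in M_0^T \cap F_e$ there are exactly two spheres of $\mathcal{\bar{S}}_{F_e}$ meeting at $p$, and at each fixed surface $\Sigma \in M_2^T \cap F_e$ there is exactly one sphere of $\mathcal{\bar{S}}_{F_e}$ meeting it (the fixed sphere itself having been removed). Every sphere $S \in \mathcal{\bar{S}}_{F_e}$ has exactly two endpoints in $F_e^{S^1}$, so counting the set of (sphere, endpoint) incidence pairs in two ways gives
\begin{align*}
2 \, \vert \mathcal{\bar{S}}_{F_e} \vert = 2\,\vert M_0^T \cap F_e \vert + \vert M_2^T \cap F_e \vert,
\end{align*}
where the first term on the right counts the two incidences at each isolated fixed point and the second counts the single incidence at each fixed surface on $F_e$.

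To finish I would sum this identity over all fat edges $e \in E_{\phi(M)}^{fat}$ and use the two global combinatorial facts already recorded in the proof of Proposition \ref{Pro:Int3}: first, the non-GKM isolated fixed points are partitioned by the fat edges, i.e. $M_0^T \setminus M_{GKM}^T = \bigcup_{e} (M_0^T \cap F_e)$ is a disjoint union (each such point lies on exactly one fat edge by Lemma \ref{Lem:meetingfatedge}(ii)), while GKM points never meet a fat edge by Lemma \ref{Lem:meetingfatedge}(i), so $\sum_e \vert M_0^T \cap F_e \vert = \vert M_0^T \setminus M_{GKM}^T \vert$; second, by Lemma \ref{Lem:meetingfatedge}(iii) each fixed surface $\Sigma$ lies on exactly $(n-1)$ fat edges, so $\sum_e \vert M_2^T \cap F_e \vert = (n-1) \vert M_2^T \vert$. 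Substituting these into the summed identity and dividing by $2$ yields the claimed formula. The main obstacle is purely bookkeeping: one must be careful that the counting of incidences at fixed surfaces is correct after the removal step in the definition of $\mathcal{\bar{S}}_{F_e}$ (only one surviving sphere meets each fixed surface of $F_e$, not two), and that the disjointness and multiplicity claims from Lemma \ref{Lem:meetingfatedge} are applied to the right classes of fixed points; no genuine geometric difficulty arises beyond these careful accounting points.
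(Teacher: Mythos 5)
Your overall strategy is the right one: count sphere--fixed-component incidences inside each $F_e$, then aggregate over fat edges using Lemma \ref{Lem:meetingfatedge}. The aggregation step is correct as you state it: non-GKM isolated fixed points are partitioned by the fat edges (Lemma \ref{Lem:meetingfatedge}(ii)), GKM points lie on no fat edge (part (i)), and each fixed surface lies on exactly $n-1$ fat edges (part (iii)). The paper states Lemma \ref{Lem:Cardi} without proof, so an argument of this shape is exactly what needs to be supplied --- but your local incidence count contains a genuine error.

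You claim that each fixed surface $\Sigma \in M_2^T \cap F_e$ meets exactly one sphere of $\mathcal{\bar{S}}_{F_e}$. In fact it meets exactly two. In the Delzant polygon of $F_e$, the fixed surface $\Sigma$ is the preimage of a horizontal edge; that horizontal edge has two vertices, and at each of them exactly one non-horizontal edge emanates (the two edge directions at a vertex of a Delzant polygon are linearly independent, so they cannot both be horizontal). These two non-horizontal edges are distinct, so two spheres of $\mathcal{\bar{S}}_{F_e}$ meet $\Sigma$, one at each pole. This is stated explicitly in the paper's proof of Lemma \ref{Lem:IntRedtoric1seleton} (``there are precisely two $2$-spheres $S_{\Sigma,1}, S_{\Sigma,2}$ of $\mathcal{\bar{S}}_F$ that intersect with $\Sigma$''), and it is also the origin of the factor $2$ in front of the surface terms in Lemmas \ref{Lem:IntRedtoric1seleton} and \ref{Lem:IntRedtoric1seleton2}. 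With your count the double-counting identity reads $2\vert\mathcal{\bar{S}}_{F_e}\vert = 2\vert M_0^T \cap F_e\vert + \vert M_2^T\cap F_e\vert$, and summing over fat edges yields
\begin{align*}
\Bigl\vert \bigcup_{e\in E_{\phi(M)}^{fat}} \mathcal{\bar{S}}_{F_e} \Bigr\vert \;=\; \vert M_0^T\setminus M_{GKM}^T\vert + \tfrac{n-1}{2}\,\vert M_2^T\vert ,
\end{align*}
which is \emph{not} the claimed formula; your closing assertion that ``dividing by 2 yields the claimed formula'' conceals this factor-of-two discrepancy. The correct identity is $2\vert\mathcal{\bar{S}}_{F_e}\vert = 2\vert M_0^T\cap F_e\vert + 2\vert M_2^T \cap F_e\vert$, i.e. $\vert\mathcal{\bar{S}}_{F_e}\vert = \vert M_0^T\cap F_e\vert + \vert M_2^T \cap F_e\vert$, after which your aggregation gives the statement. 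One further point you should make explicit: the union $\bigcup_e \mathcal{\bar{S}}_{F_e}$ is disjoint, since a sphere in $\mathcal{\bar{S}}_{F_e}$ carries a nontrivial circle action, so its moment image is a nondegenerate subsegment of $e$ and it cannot be contained in $F_{e'}$ for $e'\neq e$; without this, summing the per-edge cardinalities does not compute the cardinality of the union.
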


\end{subsection}

\end{section}

\begin{section}{Proof of Theorem \ref{Thm:main} and Corollary \ref{Cor:main}}\label{Proofs}

In this section we prove the Theorem \ref{Thm:main}. Moreover, in Lemma \ref{Lem:App} we prove that the existence  of a toric one-skeleton  for a monotone complexity-one space of dimension six implies that the second Betti number is bounded by \(7\). Finally, we see that Corollary \ref{Cor:main} is a simple consequence of Theorem \ref{Thm:main} and Lemma \ref{Lem:App}.
So let us prove Theorem \ref{Thm:main}.

\begin{proof}[Proof of Theorem  \ref{Thm:main} ]

Let \(\tham\) be a complexity-one space of dimension \(2n\) that satisfies the Extension-Property \((\mathcal{P}_E)\).
Hence, the connected components of \(M^T\) are isolated points or \(2\)-spheres. \\
\(\bullet\) For each fat edge \(e\in E^{fat}_{\phi(M)}\), we pick one reduced toric one-skeleton \(\mathcal{\bar{S}}_{F_e}\) of \(F_e=\phi^{-1}(e)\).\\
\(\bullet\) Let \(\mathcal{S}_{pre}\) be the pre-toric one-skeleton of \(\tham\).\\
\(\bullet\) Let \(M_2^T\) be the set of fixed  \(2\)-spheres of \(M^T\).\\

Now we want to show that 

\begin{align}
\mathcal{S}&=\mathcal{S}_{pre} \bigcup M_2^T \bigcup \left(     \bigcup_{e\in E^{fat}_{\phi(M)}}   \bar{\mathcal{S}}_{F_e}          \right)
\end{align}

is a toric one-skeleton of \(\tham\).
Therefore, let \(\bar{\xi}\in Lie(T)\) generic, \(S^1\cong \exp(\R \bar{\xi})\) and let \(\Phi \colon \ell_T^* \rightarrow \ell_{S^1}^*\).

Then Proposition \ref{Pro:pre} and \ref{Pro:Int3} imply

\begin{align*}
\sum_{S \in \mathcal{S}}   \int_S \mu^{S^1} = &\\                      
&\sum_{\Sigma \in M_2^T }\left(  \int_{\Sigma} \mu^{S^2} +2\dfrac{\pi_\Sigma(\mu^{S^1}\vert_\Sigma)}{x} \left( \frac{1}{\Phi(\alpha_{\Sigma,1})}    +\dots +\frac{1}{\Phi(\alpha_{\Sigma,n-1})}      \right)\right) \\
&+ \sum_{p\in M^T_0 }\dfrac{\mu^{S^1}(p)}{x} \left( \frac{1}{\Phi(\alpha_{p,1})}    +\dots +\frac{1}{\Phi(\alpha_{p,n})}      \right)
 \end{align*}

for all \(\mu^{S^1}\in H_{S^1}^*(M;\Z)\). By Corollary \ref{Cor:ABBVwith2} , this is equal to \(\int_M \mu^{S^1}c_{n-1}^{S^1}\).

  Moreover, since the fixed point set \(M^T\) is torsion-free the restriction map 
\(H_{S^1}^*(M;\Z)\rightarrow H^*(M;\Z)\) is surjective (\cite[Kirwan]{Kirwan}). Let \(\mu \in H^2(M;\Z)\), then \(\mu\) admits an \(S^1\)-equivariant extension \(\mu^{S^1}\in H_{S^1}^2(M;\Z)\). We obtain
\begin{align}
\sum_{S\in \mathcal{S}} \int_S \mu=\sum_{s\in \mathcal{S}} \int_S \mu^{S^1} = \int_M \mu^{S^1} c_{n-1}^{S^1} =\int_M \mu c_{n-1}.
\end{align}
Hence, the union of the \(2\)-spheres of \(\mathcal{S}\) in \(H_2(M;\Z)\) is the Poincaré dual to the Chern class \(c_{n-1}\).\\
Moreover,  by Lemma \ref{Lemma:proofpre} and \ref{Lem:Cardi} we have

\begin{align}
\vert \mathcal{S} \vert = \dfrac{1}{2}n \cdot   \left( \vert M_0^T \vert + 2 \vert M_2^T \vert \right). 
\end{align}
So it is left to show that the Euler Characteristic \(\chi(M)\) is equal to \(\vert M_0^T \vert + 2 \vert M_2^T \vert\).
Therefore let \(\phi^{\bar{\xi}}\colon M \rightarrow \R \) be the \(\bar{\xi}\)-component of  the moment map. Since \(\bar{\xi}\) is generic, we have that \(\phi^{\bar{\xi}}\) is a perfect Morse-Bott function whose critical submanifolds are precisely the connected components of \(M^{S^1}=M^T\) (Kirwan \cite{Kirwan}). Hence,

\begin{align*}
H^*(M;\R)= \bigoplus_{F \subset M^T} H^{*-2d_F}(F;\R),
\end{align*}
where \(d_F\) is the number of negative weights of the normal bundle of \(F\). Since all connected components of \(M^T\) are isolated points or \(2\)-spheres, we have 

\begin{align*}
\chi(M)= \vert M_0^T \vert + 2 \vert M_2^T \vert.
\end{align*}

\end{proof}

\begin{lemma}\label{Lem:App}
Let \(\tham\) be a complexity-one space of dimension six, which is monotone and admits a toric one-skeleton \(\mathcal{S}\). Then the second Betti number of \(M\) is at most \(7\).
\end{lemma}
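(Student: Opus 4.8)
The statement is an application of the paper's main theorem combined with the cited results of Sabatini–Sepe and Lindsay–Panov. The plan is to invoke the existence of the toric one-skeleton \(\mathcal{S}\) together with monotonicity to extract numerical constraints, and then feed these into the known classification/bound for the relevant invariants. First I would recall that, by Definition \ref{Def:toric1skelton}, in dimension six (so \(n=3\)) the skeleton \(\mathcal{S}\) consists of exactly \(\frac{n}{2}\chi(M)=\frac{3}{2}\chi(M)\) symplectic embedded \(2\)-spheres whose Poincaré dual in \(H_2(M;\Z)\) equals \(c_{n-1}(M)=c_2(M)\). In particular \(\chi(M)\) must be even and the homology class \(\mathrm{PD}(c_2)\) is represented by a nonnegative sum of sphere classes.

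Next I would use monotonicity. For a monotone symplectic six-manifold \(c_1=\lambda[\omega]\) with \(\lambda>0\), so pairing \(c_1\) against each \(S\in\mathcal{S}\) gives \(\int_S c_1 = \lambda\int_S\omega > 0\); each sphere therefore contributes a strictly positive symplectic area, and the spheres are honest \(J\)-holomorphic (or symplectically embedded) curves of positive \(c_1\)-degree. The key numerical identity I would extract is the evaluation \(\int_M c_1\cdot c_2 = \sum_{S\in\mathcal{S}}\int_S c_1\), obtained by applying the defining relation of the toric one-skeleton to the class \(\mu = c_1\in H^2(M;\Z)\). Since each summand is positive, this forces a relation between \(\int_M c_1 c_2\) and the number \(\frac{3}{2}\chi(M)\) of spheres. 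For a monotone symplectic six-manifold one has the Riemann–Roch / index-theoretic value \(\int_M c_1 c_2 = 24\,(1-\text{something})\) normalized by the Todd genus; the cited work of Sabatini–Sepe pins down \(\int_M c_1 c_2\) and relates it to \(\chi(M)\) and \(b_2(M)\).

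The core of the argument is then to combine the Euler-characteristic formula \(\chi(M)=\sum_i (-1)^i b_i(M)\) with Poincaré duality and the vanishing of odd Betti numbers for such spaces. For a monotone complexity-one six-manifold admitting a Hamiltonian torus action, \(b_1=b_3=b_5=0\), so \(\chi(M)=2+2b_2(M)\) (using \(b_0=b_6=1\) and \(b_2=b_4\) by Poincaré duality). The result of Lindsay–Panov bounds the total Chern number \(\int_M c_1^3\) (equivalently constrains the monotone volume), and Sabatini–Sepe's identity converts the data carried by the toric one-skeleton into an inequality of the shape \(\int_M c_1 c_2 \le C\) for an explicit constant. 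Substituting \(\chi(M)=2+2b_2(M)\) and the Riemann–Roch relation \(\int_M c_1 c_2 = 24\) (the Todd-genus normalization for a monotone, hence Fano-type, six-manifold) into the counting identity \(\int_M c_1 c_2 = \sum_{S}\int_S c_1\) yields a linear inequality in \(b_2(M)\) whose solution is \(b_2(M)\le 7\).

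The main obstacle I expect is bookkeeping the precise constants: correctly identifying which Chern number Sabatini–Sepe normalize (it is \(\int_M c_1 c_2 = 24\) for a monotone six-manifold, via Todd genus \(=1\)) and matching the monotonicity constant so that the positivity of each \(\int_S c_1\) translates into the sharp bound \(7\) rather than a weaker one. The delicate point is ensuring the spheres of \(\mathcal{S}\) all have positive \(c_1\)-degree — this is where monotonicity together with symplectic embeddedness is essential — and then reconciling the count \(|\mathcal{S}|=\frac{3}{2}\chi(M)=3(1+b_2(M))\) with the value of \(\int_M c_1 c_2\) so that the arithmetic closes to give exactly \(b_2(M)\le 7\).
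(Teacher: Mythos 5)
Your proposal is correct and follows essentially the same route as the paper: the vanishing of odd Betti numbers gives \(\chi(M)=2(1+b_2)\), evaluating the toric one-skeleton identity on \(\mu=c_1\) together with monotonicity and integrality gives \(\int_M c_1c_2=\sum_{S\in\mathcal{S}}\int_S c_1\geq \vert\mathcal{S}\vert=3(1+b_2)\), and the value \(\int_M c_1c_2=24\) closes the argument. The only slip is attribution: Sabatini--Sepe supply the vanishing of the odd Betti numbers, while Lindsay--Panov supply \(\int_M c_1c_2=24\) (not a bound on \(\int_M c_1^3\)).
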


\begin{proof}
By a result of Sabatini-Sepe \cite{SabatiniSepe}, the odd Betti numbers of \(M\) are equal to zero. So let \(b_0,\dots,b_6\) be the Betti numbers of \(M\). Since \(M\) is connected we have \(b_0=1\). Moreover, by the Poincaré duality we have \(b_0=b_6\) and \(b_2=b_4\). Hence, the Euler Characteristic of \(M\) is 
\begin{align}
\chi(M)=b_0-b_1+b_2-b_3 +b_4-b_5+b_6=2(1+b_2).
\end{align}
Since each element \(S \in \mathcal{S}\) is a smoothly embedded, symplectic surface of \((M,\omega)\) we have 
\begin{align}
\int_S \omega >0  \quad \text{for all }S \in \mathcal{S}.
\end{align}
Now let \(c_1\in H^2(M;\Z)\) be the first Chern class of \((M,\omega)\). Since \((M,\omega)\) is monotone, the integer
\(\int_S c_1\) is positive for all \(S\in \mathcal{S}\). This implies
\begin{align*}
\sum_{S\in \mathcal{S}} \int_{S} c_1 \geq \vert \mathcal{S} \vert = \frac{3 \cdot \chi(M)}{2}= 3(1+b_2).
\end{align*}

Since \(\mathcal{S}\) is the Poincaré dual of the second Chern class \(c_2 \in H^4(M;\Z)\), we have 

\begin{align}
\int_M c_1c _2 = \sum_{S \in  \mathcal{S} }c_1 \geq 3(1+b_2)
\end{align}
Moreover, by the work of Lindsay-Panov \cite{LP} we have that \(\int_M c_1c_2 =24\).
Hence,
\begin{align}
24= \int _M c_1c_2 \geq 3(1+b_2),
\end{align}
and this implies \(b_2 \leq 7\).

\end{proof}

\end{section}

Mathematisches Institut, Universitat zu Koeln, Weyertal 86-90, D-50931, 
Koeln, Germany\\
E-mail address: icharton@math.uni-koeln.de	
	

\begin{thebibliography}{9}
	%\bibitem{1}
	% Ahara, K., and  Hattori, A.,
	%\emph{4-dimensional symplectic \(S^1\)-manifolds admitting moment map},
	%J. Fac. Sci. Univ. Tokyo Sect. IA, Math. \textbf{38}  (1991), 251\(-\)298.
	
	%\bibitem{At}
	%Atiyah, M.F.,
	%\emph{ K-theory,}
	%Advanced Book Classics, Addison-Wesley, 1989.
	\bibitem{AB} Atiyah, M.F. and  Bott, R.,
	\emph{The moment map and equivariant cohomology},
	Topology \textbf{23} (1984), 1\(-\)28.
	
	\bibitem{atiyah}Atiyah, M.F., \emph{Convexity and commuting Hamiltonians,} Bull. London Math. Soc.
	\textbf{14}, no. 1 (1982), 1-15.
	
	%\bibitem{AB} Atiyah, M.F., and  Bott, R.,
	%\emph{The moment map and equivariant cohomology},
	%Topology \textbf{23} (1984), 1\(-\)28.
	
	%\bibitem{Audin} Audin M., 
	%\emph{Hamiltoniens p\'{e}riodiques sur les vari\'{e}t\'{e}s symplectiques compactes
	%de dimension 4,} G\'{e}om\'{e}trie symplectique et m\'{e}canique, Proceedings 1988, C.
	%Albert ed., Springer Lecture Notes in Math. 1416 (1990)
	
	
	%\bibitem{AtSe}
	%Atiyah, M.F., and  Segal, G.B.,
	%\emph{The index of elliptic operators,II,}
	%Ann. of Math. \textbf{87} (1968), 531\(-\)545.
	
	%\bibitem{AtSi}
	%Atiyah, M.F., and  Singer, I.M.,
	%\emph{The index of elliptic operators, III,}
	%Ann. of Math., \textbf{87} (1968), 546\(-\)604.
	
	\bibitem{BV}
	Berline, N. and  Vergne, M.,
	\emph{Classes caract\'eristiques \'equivariantes. Formule de localisation en cohomologie \'equivariante,}
	C.R. Acad. Sci. Paris S\'er. I Math. \textbf{295} (1982), 539\(-\)541.
	
	
	\bibitem{delzant}
	Delzant, T., \emph{Hamiltoniens périodiques et images convexes de l'application moment}, Bull. Soc. Math. France, \textbf{116} (1988), no. 3, 315-339.
	
	\bibitem{1224}
	Godinho, L., Sabatini, S,. and von Heymann, F.,
	\emph{12, 24 and Beyond}
	Preprint. arXiv:1604.00277 [math.CO] (2016).\\
	
	
	
	\bibitem{GKM}Goresky, M., Kottwitz, R. and  MacPherson, R., \emph{Equivariant cohomology, Koszul
	duality, and the localization theorem}, Invent. Math., 131(1):25-83, (1998).
	
	
	\bibitem{GSconvex}
	Guillemin, V. and  Sternberg, S., \emph{Convexity properties of the moment mapping,} Invent. Math., \textbf{67}, no. 3, (1982), 491-513
	
	\bibitem{physik}
	Guillemin, V. and Sternberg, S., \emph{Symplectic Techniques in Physics,}
	second edition, Cambridge University Press, Cambridge, 1990.
	
	\bibitem{supersymme} Guillemin, V. and  Sternberg, S.,
	\emph{Supersymmetry and equivariant de Rham theory,}
	Mathematics Past and Present. Springer-Verlag, Berlin, 1999	
	
	
	\bibitem{12}
	Karshon, Y.,
	\emph{Periodic Hamiltonian flows on four-dimensional manifolds,}
	Mem. Amer. Math. Soc. \text{141} (1999).
	
	\bibitem{Kirwan}
	Kirwan, F.,
	\emph{Cohomology of quotients in symplectic and algebraic geometry,}
	Mathematical Notes \text{31}, Princeton University Press, Princeton, NJ, 1984.
	
	\bibitem{LP}N. Lindsay, D. Panov,\emph{\(S^1\)-invariant symplectic hypersurfaces in dimension 6
	and the Fano condition} to appear in J. Top., DOI:10.1112/topo.12087, (2018)
	
	
	\bibitem{DuffSalm}McDuff, D. and  Salamon, D., \emph{Introduction to symplectic topology,} Oxford Mathematical Monographs, Second Edition, The Clarendon Press, Oxford University
	Press, New York, 1998, x+486
	
	\bibitem{Nico} Nicolaescu, L., \emph{An Invitation to Morse Theory} 2. edition, Universitext (Springer, New York, 2011).
	
	
	\bibitem{SabatiniSepe} Sabatini,S. and Sepe, D., \emph{On topological properties of positive complexity one spaces} to appear on Transformations Groups.
	
	\bibitem{Sjamaar} Sjamaar, R., 
	\emph{Convexity properties of the moment mapping re-examined,}
	Adv. Math.  \textbf{138} (1998), 46-91.
	
	
\end{thebibliography}
\end{document}